\numberwithin{equation}{section}
\newcommand{\even}{\mbox{{\rm\tiny even}}}
\newcommand{\odd}{\mbox{{\rm\tiny odd}}}
\theoremstyle{plain}
\newtheorem{thm}{Theorem}[section]
\newtheorem{lem}[thm]{Lemma}
\newtheorem{pro}[thm]{Proposition}
\newtheorem{cor}[thm]{Corollary}
\theoremstyle{remark}
\newtheorem{rmk}{Remark}[section]
\title[Fleming\,--\,Viot on cycle graph]{Dynamics of a Fleming\,--\,Viot type particle system\\on the cycle graph}
\author{Josu\'e Corujo}
\address[1]{CEREMADE, Universit\'e Paris-Dauphine, Universit\'e PSL, CNRS, 75016 Paris, France}
\address[2]{Institut de Math\'ematiques de Toulouse, Universit\'e de Toulouse, Institut National des Sciences Appliqu\'ees, 31077 Toulouse, France
} 
\email{jcorujo@insa-toulouse.fr}
\date{January 2020}
\begin{document}

\begin{abstract}
	We study the Fleming\,--\,Viot particle process formed by $N$ interacting continuous-time asymmetric random walks on the cycle graph, with uniform killing. 
	We show that this model has a remarkable exact solvability, despite the fact that it is non-reversible with non-explicit invariant distribution. 
	Our main results include quantitative propagation of chaos and exponential ergodicity with explicit constants, as well as formulas for covariances at equilibrium in terms of the Chebyshev polynomials. 
	We also obtain a bound uniform in time for the convergence of the proportion of particles in each state when the number of particles goes to infinity.
\end{abstract}

\keywords{Quasi-stationary distribution;  Fleming\,--\,Viot type particle system; Moran type model; propagation of chaos; ergodicity}

\subjclass[2000]{60K35, 60B10, 37A25}

\maketitle


\section{Introduction}

This paper deals with a continuous-time Markov process describing the position of $N$ particles moving around on the cycle graph.
This type of model is usually known as Fleming\,--\,Viot process, or Moran type process
\cite{cloez2016quantitative,Etheridge_2009,ferrari2007countable}.
Consider a continuous-time Markov process on $E \cup \{\partial\}$, where $E$ is finite and $\partial$ is an absorbing state.
Briefly, the Fleming\,--\,Viot process consists in $N$ particles moving in $E$ as independent copies of the original process, until one of the particles gets absorbed. When this happens, the absorbed particle jumps instantaneously and
uniformly to one of the positions of the other particles.
The Fleming\,--\,Viot processes were originally and independently introduced  by Del Moral,
Guionnet, Miclo \cite{DelMoral1999, Moral2000}
and Burdzy, Ho{\l}yst, March \cite{Burdzy2000} to approximate the law of a Markov process conditioned to non-absorption,
and its \emph{Quasi-Stationary Distribution} (QSD), which is the limit
of this conditional law when $t \rightarrow \infty$. See e.g.\ the works of M\'el\'eard and Villemonais \cite{meleard2012quasi}, Collet et al.\
\cite{collet2012quasi} and van Doorn et al.\ \cite{van2013quasi}, excellent references for an
introduction to the theory related to the QSD. For recent and quite general results about the convergence of Markov processes conditioned to non-absorption to a QSD, we refer the interested reader to \cite{Champagnat2016}, \cite{ChampagnatVillemonaisP2017a} and \cite{bansaye:hal-02062882}.

The convergence of the empirical distributions induced by Fleming\,--\,Viot processes defined on discrete
state spaces when the size of the population and the time increase have been assured under some assumptions.
For example, Ferrari and Mari\'c \cite{ferrari2007countable} and Asselah et al.\ \cite{asselah2011finite} study
the convergence of the empirical distribution induced by the
Fleming\,--\,Viot process to the unique QSD in countable and
finite discrete space settings, respectively. With the aim to study the convergence of the
particle process under the stationary distribution to the QSD, Leli\`evre et al.\ \cite{CLT_Alea} 
proves a Central Limit Theorem for the finite state case. Additionally,
Villemonais \cite{villemonais2015} and Asselah et al.\ \cite{asselah2016Galton-Watson} study the convergence to the minimal QSD in a
Galton\,--\,Watson type model and in a birth and death process, respectively. Similarly, Asselah and Thai \cite{asselah2012note} and Mari\`c \cite{Maric2015} address the study of
the $N$-particle system associated to a random walk on $\mathbb{N}$ with a
drift towards the origin, which is an absorbing state. In these scenarios
there exist infinitely many QSD for each model, so it is important to ensure
the ergodicity of the $N$-particle system and to determine to which QSD it
converges. Additionally, Champagnat and Villemonais \cite{ChampagnatVillemonaisP2018c} study the convergence of the Fleming\,--\,Viot process to the minimal QSD under general conditions, providing also some specific examples.  

In addition, some works have been devoted to the study of the speed of convergence when the number of particles and time tend to infinity. 
In particular, Cloez and Thai \cite{cloez2016quantitative} study the $N$-particle system in a discrete state space setting. They study the convergence of the empirical measure induced by the Fleming\,--\,Viot process when both $t \rightarrow \infty$ (ergodicity) and $N \rightarrow \infty$ (propagation of chaos), providing explicit bounds for the speed of convergence. 
Following the results in \cite{cloez2016quantitative}, Cloez and Thai \cite{cloez2016fleming} study two examples in details: the random walk on the complete graph with uniform killing and the random walk on the two-site graph. The simple geometries of the graphs of these models simplify the study of the $N$-particle dynamic and allows them to give explicit expressions for the stationary distributions of the $N$-particle processes and explicit bounds for its convergence to the QSD.

Consider the quantity $\lambda$ defined in \cite{cloez2016quantitative} as
\begin{equation}\label{def:lambda_Cloez}
	\lambda = \inf_{x, y} \Big( Q_{x,y} + Q_{y,x} + \sum_{s \neq x,y} Q_{x,s} \wedge Q_{y,s} \Big),
\end{equation}
where $Q= \big(Q_{x,y}\big)_{x,y}$ is the infinitesimal generator matrix of the process until absorption. 
When $\lambda = 0$ some of the results of \cite{cloez2016quantitative} do not hold and most of the bounds given become too rough. Note that $\lambda > 0$ for the two examples studied in \cite{cloez2016fleming}, but $\lambda$ is equal to zero for those models where  there exist two vertices such that the distance between them is greater than two. The quantity $\lambda$ is somehow related to the geometry of the graph associated to the Markov process. 
Hence, it becomes interesting to find explicit bounds for the speed of convergence of Fleming\,--\,Viot processes with more complex geometries.

In this article we focus on the random walk on the cycle graph $\mathbb{Z}/K \mathbb{Z}$ for $K \ge 3$. Note that for this graph it holds that $\lambda = 0$ when $K \ge 6$. For simplicity, we assume that the $N$ particles jump to the absorbing state with the same rate, i.e., we consider a process with \emph{uniform killing} (cf.\ \cite{meleard2012quasi}). Even if in this case the distribution of the conditional process is trivial, the study of the Fleming\,--\,Viot process becomes more complicated due to its non reversibility and the geometry of the cycle graph. We focus on providing bounds for the speed of the convergence of the empirical distribution induced by the particle system to the unique QSD when $t$ and $N$ tend to infinity. This example can be seen as a further step towards the study of the speed of convergence of Fleming\,--\,Viot process with more general geometry.

\subsection{Model and notations}

Consider a Markov process $(Z_t)_{t \ge 0}$ with state space $\mathbb{Z}/K\mathbb{Z}\, \cup \, \{ \partial \}$, where $K \ge 3$ and $\partial$ is an absorbing state. Specifically, the infinitesimal generator of the process is given by
\[
\mathcal{G} f(x) =
f(x + 1) - f(x) + \theta [ f(x-1) - f(x) ] + p[f (\partial) - f(x)],
\]
where $x \in \mathbb{Z}/K\mathbb{Z}$, $\mathcal{G}f(\partial) = 0$, $\theta, p \in \mathbb{R_+^*}$ and $f$ is a real function defined on $\mathbb{Z}/K\mathbb{Z} \, \cup \, \{ \partial \}$. In words, $(Z_t)_{t \ge 0}$ is an asymmetric random walk on the $K$-cycle graph, which jumps with rates $1$ and $\theta$ in the clockwise and the anti-clockwise directions, respectively. Also, with uniform rate $p$ the process jumps to the absorbing state $\partial$, i.e., it is killed.
Note that $\mathbb{Z}/K\mathbb{Z}$ is an irreducible class.
The process generated by $\mathcal{G}$ is a particular case of the processes with \emph{uniform killing in a finite state space} considered by M\'el\'eard and Villemonais \cite[\S~2.3]{meleard2012quasi}.

Let $(X_t)_{t \ge 0}$ be the analogous asymmetric random walk on the cycle graph $\mathbb{Z}/K\mathbb{Z}$ without killing. The generator of this process, denoted by $\mathcal{H}$, is given by
\[
\mathcal{H} f(x) = f(x + 1) - f(x) + \theta [ f(x-1) - f(x) ], \; \text{ for all } \; x \in \mathbb{Z}/K\mathbb{Z}.
\]
Note that, because of the uniform killing, the process $(Z_t)_{t \ge 0}$ could also be defined in the following way
\[
Z_t = \left\{
\begin{array}{ccc}
	X_t & \text{ if } & t < \tau_p\\
	\partial & \text{ if } & t \ge \tau_p,
\end{array}
\right.
\]
where $\tau_p$ is an exponential random variable with mean $1/p$ and independent of the random walk $(X_t)_{t \ge 0}$.
This means that the law of the process $(Z_t)_{t \ge 0}$ conditioned to non-absorption is given by
\[
\mathbb{P}_{\mu}[Z_t = k \,|\, t < \tau_p] = \mathbb{P}_{\mu}[X_t = k],
\]
for $k \in \mathbb{Z}/K\mathbb{Z}$ and for every initial distribution $\mu$ on $\mathbb{Z}/K\mathbb{Z}$.
As a consequence, the QSD of $(Z_t)_{t\ge 0}$, denoted by $\nu_{\mathrm{qs}}$, is the stationary distribution of $(X_t)_{t \ge 0}$, which is the uniform distribution on $\mathbb{Z}/K\mathbb{Z}$, as we will prove in Theorem \ref{thm1}.

Recall that the total variation norm of a signed measure $\mu$ defined on a
discrete probability space $E$ is given by
$\|\mu\|_{\mathrm{TV}} = \frac{1}{2} \|\mu\|_1$ where
$\|\mu\|_p = (\sum_{x \in E} |\mu(x)|^p)^{1/p}$ is the $p$-norm, see for
instance \cite[\S~4.1]{peres2017markov}. If $(f_N)$ and $(g_N)$ are two real
sequences, $f_N \mathrel{\mathop{\sim}\limits_{N \rightarrow \infty}} g_N$ means
${f_N} - {g_N} = o \left(g_N\right)$.

Now, assume we have $N$ particles with independent behavior driven by the generator $\mathcal{G}$, until one of them jumps to the absorbing state. When this happens, the particle instantaneously and uniformly jumps to one of the positions of the other $N-1$ particles. 
We denote by $\big( \eta^{(N)}_t \big)_{t \ge 0}$ the Markov process, which accounts the positions of the $N$ particles in the $K$-cycle graph at time $t$. 
Consider the state space $\mathcal{E}_{K,N}$ of this process, which is given by
\begin{equation*}\label{def_space_state}
	\mathcal{E}_{K,N} = \left\{ \eta : \mathbb{Z}/K \mathbb{Z} \rightarrow \mathbb{N}, \sum\limits_{k = 0}^{K - 1} \eta(k) = N \right\}.
\end{equation*}

At time $t$ the system is in state $\eta_t = (\eta_t(0), \eta_t(1), \dots, \eta_t(K-1))$ if there are $\eta_t(k)$ particles on site $k$, for $k=0,1, \dots, K-1$. Note that the cardinality of $ \mathcal{E}_{K,N}$ is equal to that of the set of nonnegative solutions of the integer equation $x_1 + x_2 + \dots + x_K = N$, which is $ \mathrm{card}\,\big( \mathcal{E}_{K,N} \big) = \binom{K+N-1}{N}$, see e.g.\ \cite[Thm.\ D, \S~1.7]{comtet2012advanced}.

The generator of the $N$-particle process $\big(\eta^{(N)}_t\big)_{t \ge 0}$, denoted by $\mathcal{L}_{K,N}$, applied to a function $f$ on $\mathcal{E}_{K,N}$ reads
\begin{equation}\label{generator1}
	(\mathcal{L}_{K,N} f)(\eta) = \sum\limits_{i, j \in \mathbb{Z}/K \mathbb{Z}} \eta(i) \left( \mathbbm{1}_{\{ j = i + 1\}} + \theta \mathbbm{1}_{\{ j = i - 1\}} + p \frac{\eta(j)}{N-1} \right) [f( T_{i \rightarrow j} \eta ) - f(\eta) ],
\end{equation}
where $\theta, p >0$ and for every $\eta \in \mathcal{E}_{K,N}$ satisfying $\eta(i) > 0$, the configuration $T_{i \rightarrow j} \eta$ is defined as $T_{i \rightarrow j} \eta = \eta - \mathbf{e}_i + \mathbf{e}_j$ and $\mathbf{e}_i$ is the $i$-th canonical vector of $\mathbb{R}^K$. 
Under these dynamics, each of the $N$ particles, no matter where it is, can jump to every site $j \in \mathbb{Z}/K \mathbb{Z}$ such that $\eta(j) > 0$. Note that the process $\big(\eta_t^{(N)}\big)_{t \ge 0}$ is irreducible. Consequently, it has a unique stationary distribution denoted $\nu_N$.

For every $\eta \in \mathcal{E}_{K,N}$ the empirical distribution $m(\eta)$ associated to the configuration $\eta$ is defined by
\[
m( \eta ) = \frac{1}{N} \sum_{k=0}^{K-1} \eta(k) \delta_{ \{ k \} },
\]
where $\delta_{\{k\}}$ is the Dirac distribution at $k \in \mathbb{Z}/K \mathbb{Z}$.

The (random) empirical distribution $m\big(\eta_t^{(N)}\big)$ approximates the QSD of the process $(Z_t)_{t \ge 0}$ (cf.\ \cite{asselah2011finite,ferrari2007countable,Villemonais_2014}) which due to Theorem \ref{thm1} below is the uniform distribution. We are interested in studying how fast $m\big(\eta_t^{(N)}\big)$ converges to the uniform distribution on $\mathbb{Z}/K\mathbb{Z}$ when both $t$ and $N$ tend to infinity. Consider $\eta_{\infty}^{(N)}$ a random variable with distribution $\nu_N$, the stationary distribution of the process $\big(\eta_t^{(N)}\big)_{t \ge 0}$. In this work we develop a similar analysis to that of the complete graph dynamics in \cite{cloez2016fleming}. We focus on the convergences when both $N$ and $t$ tend to infinity, as shown in the following diagram
\[
\begin{array}{rcl}
	m\big(\eta_{t}^{(N)}\big) & \xrightarrow[t \rightarrow \infty]{}  & m\big(\eta_{\infty}^{(N)}\big) \\
	N  \Big\downarrow \hspace*{.4 cm}  &  &  \hspace*{.4 cm} \Big\downarrow N  \\
	\mathcal{L}(Z_{t} \mid t < \tau_p) & \xrightarrow[t \rightarrow \infty]{} & \hspace*{0.4 cm} \nu_{\mathrm{qs}}
\end{array}
\]
where the limits are in distribution. Theorem \ref{thm1} provides  lower and upper exponential bounds for the speed of convergence of $\mathcal{L}(Z_{t} \mid t < \tau_p)$ to $\nu_{\mathrm{qs}}$ in the $2$-norm, when $t \rightarrow \infty$. Likewise, Corollary \ref{corol:Convergence_QSD} and Theorem \ref{ergodicity_thm_2} give bounds for the speed of convergence of $m\big(\eta_{t}^{(N)}\big)$ to  $\mathcal{L}(Z_{t} \mid t < \tau_p)$ and $m\big(\eta_{\infty}^{(N)}\big)$ to $\nu_{\mathrm{qs}}$, when $N \rightarrow \infty$.

The quantitative long time behavior of the $N$-particle system in countable state spaces is studied in \cite{cloez2016quantitative}. Using a coupling technique and  under certain conditions, an exponential bound is provided for the convergence of $\mathcal{L} \big(\eta_{t}^{(N)}\big)$ to $\nu_N$ in the sense of a \emph{Wasserstein distance} \cite[Thm.\ 1.1]{cloez2016quantitative}. In particular, the parameter $\lambda$ defined by (\ref{def:lambda_Cloez}) needs to be positive. As we said, this is not the case of the asymmetric random walk on the $K$-cycle graph with uniform killing, when $K \ge 6$. 
A study of this convergence can be carried out using the spectrum of the generator $\mathcal{L}_{K,N}$, which is obtained in the recent paper \cite{2020arXiv201008809C}.
Indeed, using Example 3 in \cite{2020arXiv201008809C} we can get the following asymptotic expression for the profile of the convergence in total variation distance to stationarity:
\[
	\max_{\eta \in \mathcal{E}_{K,N}} \left\| \mathcal{L}_{\eta} \left( \eta_t^{(N)} \right)- \nu_{N} \right\|_{\mathrm{TV}} = \mathcal{O} \left( \mathrm{e}^{-\rho_K t} \right),
\]
where $\rho_K = 2(1 + \theta)\sin^2\left({\pi}/{K} \right)$,  $\mathcal{L}_{\eta} \left( \eta_t^{(N)} \right)$ stands for the law of the process generated by $\mathcal{L}_{K,N}$ at time $t$ and with initial distribution concentrated at $\eta \in \mathcal{E}_{K,N}$, and for a real positive function $f$ we denote by $\mathcal{O}(f)$ another real positive function such that
\[
	C_1 f(t) \le \mathcal{O}(f)(t) \le C_2f(t),
\]
for two constants $0 < C_1 \le C_2 < \infty$ and for all $t \ge T$, for $T > 0$ large enough.
It would be interesting to get non asymptotic results, with explicit constants, for the speed of convergence of the process generated by $\mathcal{L}_{K,N}$ to stationarity.
In order to do that, one possible alternative is to use the results in the recent paper of Villemonais \cite{Villemonais2019}, for a suitable distance, to get upper bounds for the speed of convergence in the sense of a \emph{Wasserstein distance}.
In addition, the recent work of Hermon and Salez \cite{MR3984254} offers clues to an alternative method for solving this problem: control the \emph{Dirichlet form} of the Fleming\,--\,Viot process in terms of the \emph{Dirichlet form} of a single particle. 
Moreover, it remains as an open question the study of the existence of a \emph{cutoff phenomenon} when the number of particles $N$ tends towards infinity.
These are possible directions for future research.

\subsection{Main results}

We first prove that the uniform distribution on $\mathbb{Z}/K \mathbb{Z}$ is the QSD of $(Z_t)_{t \ge 0}$. 
We also establish exponential bounds in the $2$-distance and the total variation distance between the distribution of this process at time $t$ and its QSD.

Let us denote by $\mathcal{L}_{\nu}(Z_t \mid t < \tau_p)$ the distribution at time $t$ of the asymmetric random walk on the cycle graph, $(Z_t)_{t \ge 0}$, with initial distribution $\nu$ on $\mathbb{Z}/K \mathbb{Z}$ and conditioned to non-absorption up to time $t$.
Let us denote by $\varphi_{\nu}$ the characteristic function of a distribution $\nu$ on $\mathbb{Z}/K \mathbb{Z}$, which satisfies
\[
  \varphi_{\nu}(t) = \mathbb{E}_{\nu}\left[\mathrm{e}^{\mathrm{i} t X}\right] = \sum_{k = 0}^{K-1} \nu(k) \mathrm{e}^{\mathrm{i} t k},
\]
for all $t \ge 0$ \cite[\S~3.3]{MR3930614}. Note that
\begin{equation*}
  \varphi_{\nu_{\mathrm{qs}}} (t) = \frac{1 - \mathrm{e}^{\mathrm{i} t K}}{K (1 - \mathrm{e}^{\mathrm{i} t})},
\end{equation*}
for all $t \ge 0$.
Let us denote by $\mathrm{D}_2(t)$ and $\mathrm{D}_{\mathrm{TV}}(t)$ the maximum distances to stationarity in the $2$-distance and in total variation at time $t$, respectively, which are defined as follows:
\begin{eqnarray*}
	\mathrm{D}_2(t) &=& \max_{\nu} \left\| \mathcal{L}_{\nu}(Z_t \mid  t < \tau_p) - \nu_{\mathrm{qs}} \right\|_{2},\\
	\mathrm{D}_{\mathrm{TV}}(t) &=& \max_{\nu} \left\| \mathcal{L}_{\nu}(Z_t \mid  t < \tau_p) - \nu_{\mathrm{qs}} \right\|_{\mathrm{TV}},
\end{eqnarray*}
where the maximum runs over all possible initial distributions $\nu$ on $\mathbb{Z}/K \mathbb{Z}$.
Since $\mathbb{Z}/K \mathbb{Z}$ is finite, we know that the convergence of $\mathcal{L}_{\nu}(Z_t \mid t < \tau_p)$ to $\nu_{\mathrm{qs}}$ is exponential \cite{Darroch_1967}. 
The following theorem gives exponential lower and upper bounds for this convergence.

\begin{thm}[Convergence in $2$-distance and total variation distance]\label{thm1}
  The QSD of the process $(Z_t)_{t \ge 0}$, $\nu_{\mathrm{qs}}$, is the
  uniform distribution on $\mathbb{Z}/K\mathbb{Z}$. Also, denoting
  \[
    \Delta_t(\mu,\nu)=\mathcal{L}_{\nu}(Z_t \mid t < \tau_p)-
    \mathcal{L}_{\mu}(Z_t \mid t < \tau_p),
  \]
  we have, for for every initial distributions $\nu$ and $\mu$ on
  $\mathbb{Z}/K\mathbb{Z}$ and every $t\geq0$,
  \begin{align}
    \left| \varphi_{\nu}\left( \frac{2 \pi}{K} \right) - \varphi_{\mu}\left( \frac{2 \pi}{K} \right) \right|  
    \mathrm{e}^{-\rho_K t} &\le \left\| \Delta_t(\mu,\nu) \right\|_{2}
    \le  \left\| \nu - \mu \right\|_2\mathrm{e}^{-\rho_K t},  \label{exp_bound_norm_2distrib} \\
    \frac{\sqrt{K}}{2} \left| \varphi_{\nu}\left( \frac{2 \pi}{K} \right) - \varphi_{\mu}\left( \frac{2 \pi}{K} \right) \right| 
    \mathrm{e}^{-\rho_K t} &\le 
    \left\| \Delta_t(\mu,\nu) \right\|_{\mathrm{TV}}  
    \le \frac{\sqrt{K}}{2} \left\| \nu - \mu \right\|_2 \; \mathrm{e}^{- \rho_K t},  \label{exp_bound_TVdistrib}
  \end{align}
  where
  \begin{eqnarray}
    \rho_K &=& 2(1 + \theta)\sin^2\left(\frac{\pi}{K} \right). \label{def:rhoK}
  \end{eqnarray}
  Moreover, the convergence of $\mathcal{L}_{\nu}(Z_t \mid t < \tau_p)$ to
  $\nu_{\mathrm{qs}}$ in the $2$-distance and the total variation distance is
  exponential with rate $-\rho_K$. Indeed, for all $t\geq0$,
  \begin{align}
    \frac{1}{\sqrt{K}} \mathrm{e}^{ - \rho_K t} \le 
    & \operatorname{D}_2(t) 
    \le 
    \sqrt{\frac{K - 1}{K}}  \mathrm{e}^{- \rho_K t},   \label{exp_bound_2norm} \\
    \frac{1}{2} \mathrm{e}^{- \rho_K t} \le 
    & \operatorname{D}_{\mathrm{TV}}(t) 
    \le \frac{1}{2}\sqrt{K- 1} \; \mathrm{e}^{- \rho_K t}. \label{exp_bound_TVnorm}
  \end{align}
\end{thm}

In spite of its simplicity, we did not find this result in the literature. Therefore, for the sake of completeness, we provide a proof of this theorem in Section \ref{sec:N_particles}.

Consider the function $\phi: \mathcal{E}_{K,N} \rightarrow \mathcal{E}_{K,N}$ defined by
\begin{equation}\label{def_rotation}
	\phi(\eta_0, \eta_1, \dots, \eta_{K-1}) = (\eta_1, \eta_2, \dots, \eta_{K-1}, \eta_0)
\end{equation}
and its $l$-composed $\phi^{(l)} = \phi \circ \phi \circ \dots \circ \phi$
($l$ times) which acts on the cycle graph by rotating it $l$ sites clockwise,
for $l \in \{1,2,\dots, K-1\}$.

Even if the dynamics induced by $\mathcal{G}$ has some symmetry (in fact, it is symmetric when $\theta = 1$), we prove that $\big(\eta^{(N)}_t\big)_{t \ge 0}$ is not reversible when $K \ge 4$ or when $K = 3$ and $\theta \neq 1$. However, we show that the stationary distribution of the $N$-particle process is rotation invariant. Using this invariance, we calculate the mean of the proportion of particles in each state under the stationary distribution.
\begin{thm}[Non-reversibility and rotation invariance]\label{big_theo}
	The $N$-particle system with generator given by (\ref{generator1}) has the following properties
	\begin{enumerate}
		\item[a)] It is not reversible, except when $K=3$ and $\theta = 1$.
		\item[b)] Its stationary distribution, denoted by $\nu_N$, is
		invariant by rotations, i.e.\
		\[
		\nu_{N} = \nu_N \circ \phi^{(l)},\quad l \in \{1,2,\dots, K-1\}.
		\]
		\item[c)] Under the stationary dynamics, the empirical distribution
		of the $N$-particle system is an unbiased estimator of the $\mathrm{QSD}$ of
		$(Z_t)_{t \ge 0}$, i.e.\
		\[
		\mathbb{E}_{\nu_N} \left[ \frac{\eta(k)}{N}  \right]  = \frac{1}{K},
		\quad k \in \mathbb{Z}/K \mathbb{Z}.
		\]
	\end{enumerate}
\end{thm}

Theorem  \ref{big_theo} is proved in Section \ref{sec:covariances_stationary}. Using parts $b)$ and $c)$ of Theorem \ref{big_theo}, the following result is immediate.

\begin{cor}[Cyclic symmetry]\label{lemma3}
	For every $K \ge 3$ we have
	\begin{equation*}\label{ec6}
		\operatorname{Cov}_{\nu_{N}}\left[ \frac{\eta(0)}{N}, \frac{\eta(k)}{N} \right]
		= \operatorname{Cov}_{\nu_{N}}\left[\frac{\eta(0)}{N}, \frac{\eta(K-k)}{N}
		\right],
		\quad k \in \mathbb{Z}/K\mathbb{Z}.
	\end{equation*}
\end{cor}

Let $T_n$ and $U_n$ be the $n$-th degree Chebyshev polynomials of first and second kind, respectively, for $n \ge 1$. We recall that polynomials $\big(T_n\big)_{n \ge 0}$ and $\big(U_n\big)_{n \ge 0}$ satisfy both the recurrence relation
\begin{equation}\label{recur_cheby}
	p_{n+1}(x) = 2x \, p_n(x) - p_{n-1}(x), \text{ for all } n \ge 1,
\end{equation}
with initial conditions $T_0(x) = U_0(x) = 1$, $T_1(x) = x$ and $U_1(x) = 2x$, see e.g.\ \cite{mason2002chebyshev}. 
We also extend the definition of the Chebyshev polynomials of second kind for $n=-1$, by putting $U_{-1}(x) = 0$.

The following theorem provides explicit expressions for $\operatorname{Cov}_{\nu_N} \left[ {\eta(0)}/{N}, {\eta(k)}/{N} \right]$ in terms of the Chebyshev polynomials of first and second kind, for $k \in \{ 0,1,\dots, K-1 \}$ and the constant $\beta_N$, defined by
\begin{equation} \label{eq:params}
	\beta_N  = 2\left( 1+  \frac{p}{(N-1)(1+\theta)} \right).
\end{equation}

\begin{thm}[Explicit expressions for the covariances] \label{thm:sk_expl}
	We have
	\begin{itemize}
		\item If $K= 2 K_2$, $K_2 \geq 2$,
		\begin{eqnarray}
			\operatorname{Var}_{\nu_N}\left[ \frac{\eta(0)}{N} \right] &=& \frac{N-1}{KN} \frac{2  }{\beta_N+2} \frac{T_{K_2}(\beta_N/2) }{ U_{K_2 - 1}(\beta_N/2)  } + \frac{1}{KN}  - \frac{1}{K^2},\label{sys1:s0even}\\
			\operatorname{Cov}_{\nu_N} \left[ \frac{\eta(0)}{N}, \frac{\eta(k)}{N} \right] &=& \frac{N -1}{KN} \frac{2  }{\beta_N+2} \frac{T_{K_2-k}(\beta_N/2) }{U_{K_2- 1}(\beta_N/2)  } - \frac{1}{K^2}, \label{sys1:skeven}
		\end{eqnarray}
		for all $1 \leq k \leq K_2-1$.
		\item If $K = 2 K_2 + 1$, $K_2 \geq 1$,
		\begin{eqnarray}
			\operatorname{Var}_{\nu_N}\left[ \frac{\eta(0)}{N} \right] &=&   \frac{N-1}{KN} \frac{U_{K_2}(\beta_N/2) - U_{K_2-1}(\beta_N/2)  }{U_{K_2}(\beta_N/2) + U_{K_2-1}(\beta_N/2)  } + \frac{1}{KN}  - \frac{1}{K^2},\label{sys1:s0odd}\\
			\operatorname{Cov}_{\nu_N} \left[ \frac{\eta(0)}{N}, \frac{\eta(k)}{N} \right] &=&   \frac{N - 1}{KN} \frac{U_{K_2-k}(\beta_N/2) - U_{K_2-k-1}(\beta_N/2)  }{ U_{K_2}(\beta_N/2) + U_{K_2-1}(\beta_N/2)  }  - \frac{1}{K^2},\label{sys1:skodd}
		\end{eqnarray}
		for all $1 \leq k \leq K_2$.
	\end{itemize}
	
\end{thm}

Theorem \ref{thm:sk_expl} is proved in Section \ref{sec:thm:sk_expl}.
Using previous result it is possible to show that the covariance between the proportions of particles under the stationary distribution in two different states decreases as a function of the graph distance between the states.

\begin{cor}[Geometry of the cycle graph and covariances]\label{corollary_geometry}
	The covariance between two states under the stationary measure, $\nu_N$, is decreasing as a function of the graph distance between these states, i.e.\ for all $k=0,1,\dots, \lfloor \frac{K}{2} \rfloor - 1$ we have
	$$\operatorname{Cov}_{\nu_N}\left[ \frac{\eta(0)}{N}, \frac{\eta(k)}{N} \right] \ge \operatorname{Cov}_{\nu_N}\left[ \frac{\eta(0)}{N}, \frac{\eta(k+1)}{N} \right].$$
\end{cor}

With the aim of proving the convergence of the proportion of particles in each state to $1/K$, we study the behavior of $\operatorname{Var}_{\nu_N}[\eta(0)/N]$ as a function of $1/N$ when $N$ tends to infinity. Theorem 2 in \cite{asselah2011finite} states that these variances vanishes when $N$ goes to infinity. 
We thus focus on the speed of this convergence.
For this purpose, we find the asymptotic development of second order for $\operatorname{Cov}_{\nu_N}\left[ {\eta(0)}/{N}, {\eta(k)}/{N} \right]$ as a function of $1/N$ when $N$ tends to infinity, for $k \in \mathbb{Z}/ K \mathbb{Z}$.

\begin{thm}[Asymptotic development of two-particle covariances]\label{thm_two_particles_covariance}
	The asymptotic series expansion of order $2$ when $N \to +\infty$ of $\operatorname{Cov}_{\nu_N}\left[ \frac{\eta(0)}{N}, \frac{\eta(k)}{N} \right]$, for $k \in \mathbb{Z}/ K \mathbb{Z}$, is given by
	{\small
		\begin{align}
			\operatorname{Cov}_{\nu_N}\left[ \frac{\eta(0)}{N}, \frac{\eta(k)}{N} \right] &=  \frac{1}{K N}
			\left( \mathbbm{1}_{\{k=0\}} -\frac{1}{K} + \frac{6k(k - K) + K^2-1}{6 K} \frac{p}{1 + \theta} \right) \nonumber \\
			&+ \frac{1}{K^2 N^2}\frac{30k(K-k)[k(K-k)+2] - (K^2-1)(K^2+11)}
			{180 } \left( \frac{p}{1 + \theta} \right)^2 + o \left( \frac{1}{N^2} \right).  \label{DLcovariancesK}
		\end{align}
	}
\end{thm}

The following result provides a bound for the speed of convergence of the empirical distribution induced by the $N$-particle system to the QSD when $N \rightarrow \infty$.
\begin{cor}[Convergence to the QSD]\label{corol:Convergence_QSD}
	We have
	\begin{equation}\label{ineq:bounding_norm2_corol}
		\mathbb{E}_{\nu_{N}}\big[ \left\| m(\eta) - \nu_{\mathrm{qs}} \right\|_{2} \big] \leq \sqrt{\frac{K-1}{N}} \sqrt{1 + \frac{p(K+1)}{6 (1 + \theta)}} + o\left( \frac{1}{\sqrt{N}}\right).
	\end{equation}
\end{cor}

Theorem \ref{thm_two_particles_covariance} and Corollary \ref{corol:Convergence_QSD} are proved in Section \ref{subsec_thm_assymptotic}.
In particular, Corollary \ref{corol:Convergence_QSD} implies the convergence at rate $1/\sqrt{N}$ under the stationary distribution of $m(\eta)$ towards the uniform distributions, when $N \to \infty$. 
Cloez and Thai \cite[Cor.\ 2.10]{cloez2016fleming} provide the same rate of convergence for the Fleming\,--\,Viot process in the $K$-complete graph.
Moreover, Champagnat and Villemonais \cite[Thm.\ 2.3]{ChampagnatVillemonaisP2018c} provide a general rate of convergence $1/N^{\alpha}$, with $\alpha = \frac{\gamma}{2(\|\kappa\|_{\infty} + \gamma)}$. 
In particular, as soon as $\|\kappa\|_{\infty} \neq 0$, one has $\alpha < 1/2$, which is actually not the optimal rate for the asymmetric random walk, killed at a uniform rate, studied in this paper.
To the best of our knowledge, there are no general results on Fleming\,--\,Viot process in discrete spaces assuring the rate of convergence $1/\sqrt{N}$, under the stationary distribution, of the empirical distribution to the QSD.

Finally, in Section \ref{sec:ergo} we study the convergence of the empirical distribution, $m(\eta_t)$, to the quasi-stationary distribution of $(Z_t)_{t \ge 0}$ when $t$ tends to infinity.
Let us denote by $\overline{m}\big(\eta_t^{(N)}\big)$ the empirical mean measure induced by the $N$-particle process at time $t$, defined by $\overline{m}\big(\eta_t^{(N)}\big)(k) = \mathbb{E}\big[m\big(\eta_t^{(N)}\big)(k)\big] = \mathbb{E}\big[\eta_t^{(N)}(k)/N\big]$. Using (\ref{exprLBk}) we can prove the following two theorems.

\begin{thm}[Mean empirical distribution]\label{ergodicity_thm}
	Consider $\eta \in \mathcal{E}_{K,N}$ and $\big(\eta_t^{(N)}\big)_{t \ge 0}$ the $N$-particle process with initial distribution concentrated at $\eta$. We have
	\[
	\overline{m}\big(\eta_t^{(N)}\big) = \mathcal{L}_{m(\eta)}(Z_t \mid t < \tau_p).
	\]
	Furthermore, for every  probability measure $\nu$ on $\mathbb{Z}/K \mathbb{Z}$ we obtain
	\begin{equation}
		\left|  \varphi_{m(\eta)}\left( \frac{2 \pi}{K} \right) - \varphi_{\nu} \left( \frac{2 \pi}{K} \right) \right| \mathrm{e}^{ - \rho_K t } 
		\le \left\| \overline{m}\big(\eta_t^{(N)}\big) - \mathcal{L}_{\nu}(Z_t \mid t < \tau_p) \right\|_2 
		\le \left\|  m(\eta) - \nu \right\|_2 \mathrm{e}^{ - \rho_K t }, \label{bound_ergo_L2}
	\end{equation}
	where $\rho_K$ are defined by (\ref{def:rhoK}), and $\varphi_{m(\eta)}$ and $\varphi_{\nu}$ denote the characteristic functions associated to the distributions $m(\eta)$ and $\nu$, respectively.
\end{thm}

Thus, the proportion of particles in each state is an unbiased estimator of the distribution of the conditioned process for all $t \ge 0$. Using \cite[Thm.\ 1.2]{ferrari2007countable} we know that the variance of the proportion of particles in each state at time $t \ge 0$ vanishes when $N$ goes to infinity, for every $t \ge 0$. The following result provides a bound for this convergence.

\begin{thm}[Convergence to the Conditioned Process]\label{ergodicity_thm_2}
	We have the following uniform upper bound for the variance of the proportion of particles in each state
	\begin{equation}
		\hspace*{- 8 pt} \max\limits_{\substack{\eta \in \mathcal{E}_{K,N} \\ k \in \mathbb{Z}/K \mathbb{Z}}} \left| \operatorname{Var}_{\eta} \left[ \frac{\eta_t^{(N)}(k)}{N} \right] - \operatorname{Var}_{\nu_N} \left[ \frac{\eta(k)}{N} \right] \right| \le   C_{K,N} \frac{\mathrm{e}^{-p_N t} - \mathrm{e}^{- \rho_K t}}{\rho_K - p_N}  + \mathrm{e}^{- p_N t} \operatorname{Var}_{\nu_N} \left[ \frac{\eta(0)}{N} \right],  \label{thm_var_1}
	\end{equation}
	where $\rho_K$ is given by (\ref{def:rhoK}) and
	\begin{eqnarray}
		p_N &=& \frac{2 p}{N - 1}, \label{def:pN}\\
		C_{K,N} &=& \frac{2}{N} \left( 1 + \theta + \frac{p}{N-1} + \frac{p N(K+1)\sqrt{K-1}}{K\sqrt{K}(N-1)} \right). \label{def:CKN}
	\end{eqnarray}
	Furthermore,
	\begin{align}
		 \left|\varphi_{m(\eta)}(t) - \varphi_{\nu}(t)\right| &\mathrm{e}^{- \rho_K t}  \le \mathbb{E}_{\eta} \left[\left\|m\left(\eta_{t}^{(N)}\right) - \mathcal{L}_{\nu} (Z_t \mid t \le \tau_p) \right\|_2\right] \nonumber\\
		&\leq \sqrt{\frac{K}{N}} \left( D_K \frac{1 - \mathrm{e}^{- \rho_K t}}{\rho_K}  + E_K  \right)^{1/2}  + \mathrm{e}^{- \rho_K t} \|m(\eta) - \nu\|_2 + o\left( \frac{1}{\sqrt{N}} \right), \label{thm_var_2}
	\end{align}
	for every $\eta \in \mathcal{E}_{K,N}$ and every initial distribution $\nu$ on $\mathbb{Z}/K \mathbb{Z}$,
	where $\rho_K$ is given by (\ref{def:rhoK}), and
	\begin{equation}
		D_K = 2 \left(1 + \theta +  \frac{p(K+1)\sqrt{K-1}}{K\sqrt{K}} \right), \;\;\;\;	E_K = \frac{K-1}{K^2} + \frac{K^2 - 1}{6 K^2 (1 + \theta)}. \label{def:DEK}
	\end{equation}
\end{thm}
Theorems \ref{ergodicity_thm} and \ref{ergodicity_thm_2} is proved in Section \ref{sec:ergo}. Similar results are proved in \cite{cloez2016fleming} for the Fleming\,--\,Viot process on the complete graph and for the two-point process.

\begin{rmk}[Uniform bound]
	Note that the bound given by (\ref{thm_var_1}) tends exponentially towards zero when $t \rightarrow \infty$. In particular, the right side of (\ref{thm_var_1}) is bounded in $t$ and can be used to obtain a uniform bound for the variance of the proportion of particles in each state of order $1/N$. Namely, using (\ref{thm_var_1}) and the inequality
	\(
	(\mathrm{e}^{-p_N t} - \mathrm{e}^{- \rho_K t})/(\rho_K - p_N) \le 1/\max(\rho_K, p_N),
	\)
	we obtain
	\[
	\sup_{t \ge 0} \max\limits_{\substack{\eta \in \mathcal{E}_{K,N} \\ k \in \mathbb{Z}/K \mathbb{Z}}}\operatorname{Var}_{\eta} \left[ \frac{\eta_t^{(N)}(k)}{N} \right] \le    \frac{C_{K,N}}{\max(\rho_K, p_N)} +  2 \operatorname{Var}_{\nu_N} \left[ \frac{\eta(0)}{N} \right]
	= \left( \frac{D_K}{\rho_K} + 2 E_K \right) \frac{1}{N} + o\left( \frac{1}{N} \right),
	\]
	where $\rho_K$, $p_N$, $C_{K,N}$ and $D_k$ and $E_k$, are given by (\ref{def:rhoK}), (\ref{def:pN}), (\ref{def:CKN}) and (\ref{def:DEK}), respectively.
	
	Similar bounds are obtained for the convergence to the conditional distribution for Fleming\,--\,Viot process in discrete state spaces, see e.g.\ \cite[Thm.\ 1.1]{Moral2000} and \cite[Thm.\ 2.2]{Villemonais_2014}. However, these results are not uniform in $t \ge 0$. Corollary 1.5 in \cite{cloez2016quantitative} does provide a uniform bound under certain conditions of order $1/N^{\gamma}$, with $\gamma < 1/2$, for the $1$-distance between the empirical law associated to the Fleming\,--\,Viot process at time $t$ and the law of the conditioned process. 
	However, this result does not hold for the Fleming\,--\,Viot process on the $K$-cycle graph we study here, for $K \ge 6$, since the parameter $\lambda$ given by (\ref{def:lambda_Cloez}) is null.
\end{rmk}

The rest of this paper is organized as follows. Section \ref{sec:N_particles} gives the proof of Theorem \ref{thm1}. In Section \ref{sec:covariances_stationary} we study the covariances of the proportions of particles in each state under the stationary distribution, and 
we thus prove Theorems \ref{big_theo}, \ref{thm:sk_expl} and \ref{thm_two_particles_covariance}. Finally, Section \ref{sec:ergo} is devoted to the proof of Theorems \ref{ergodicity_thm} and \ref{ergodicity_thm_2} related to the variance of the proportion of particles in each site at a given time $t \ge 0$.


\section{The asymmetric random walk on the cycle graph} \label{sec:N_particles}

We first prove that the QSD of $(Z_t)_{t \ge 0}$, denoted by $\nu_{\mathrm{qs}}$, which is the stationary distribution of $(X_t)_{t \ge 0}$, is the uniform distribution on $\mathbb{Z}/K \mathbb{Z}$. We also provide exponential bounds for the speed of convergence in the $2$-distance and the total variation distance of $\mathcal{L}_{\nu}(Z_t \mid t < \tau_p)$ to $\nu_{\mathrm{qs}}$.

Recall that a square matrix $C$ is called circulant if it takes the form
\begin{equation}\label{def:circulant}
	C =
	\left(
	\begin{array}{ccccc}
		{c_{0}} & {c_{1}} & {\ldots} & {c_{n-2}} & {c_{n-1}} \\
		{c_{n-1}} & {c_{0}} & \ddots & c_{n-3} & {c_{n-2}} \\
		{\vdots} & \vdots & \ddots & \ddots & \vdots \\
		{c_{2}} & c_{3} & {\ddots} & c_0 & {c_{1}} \\
		{c_{1}} & {c_{2}} & {\ldots} & {c_{n-1}} & {c_{0}}
	\end{array}
	\right).
\end{equation}
It is evident that a circulant matrix is completely determined by its first row, therefore we will denote a circulant matrix with the form given by (\ref{def:circulant}) by $C = \operatorname{circ}(c_0, c_1, \dots, c_{n-1})$.

Let $Q$ be the infinitesimal generator matrix of the process $(X_t)_{t \ge 0}$. Then, $Q$ is circulant and it satisfies
\begin{equation}\label{Qmatrix_def}
	Q = \operatorname{circ}(-(1 + \theta), 1, 0, \dots, 0, \theta).
\end{equation}
Let us also denote by $\mathrm{i}$ the complex root of $-1$.
Since the matrix \(Q\)
is circulant, its spectrum is explicitly known, as follows in the next lemma.
\begin{lem}[Spectrum of $Q$]\label{spectrum_Q}
	The matrix $Q$ satisfies $Q = F_K \Lambda F_{K}^{\star}$, where
	\begin{itemize}
		\item $F_K$ is the $K$-dimensional Fourier matrix, i.e.\ the unitary matrix defined by
		\begin{equation}\label{eq:FourierMatrix}
			[F_K]_{r,c} = \frac{1}{\sqrt{K}}(\omega_K)^{-r \, c},
		\end{equation}
		for each $r,c \in \{0,1,\dots,K-1\}$, where $\omega_K = \mathrm{e}^{\mathrm{i} \frac{2 \pi}{K}}$,
		\item $F_K^{\star}$ is the conjugate of $F_K$ (and also its inverse because $F_K$ is unitary and symmetric),
		\item $\Lambda$ is the $K \times K$ diagonal matrix with $[\Lambda]_{k,k} = \lambda_k$, for all $0 \leq k \leq K-1$, where \[\lambda_k = -(1 + \theta)\sin^2\left( \frac{\pi k}{K} \right) + \mathrm{i}(1-\theta) \sin\left( \frac{ 2 \pi k}{K} \right),\]
		for $k =0,1, \dots, K-1$.
	\end{itemize}
\end{lem}

\begin{proof}[Proof of Lemma \ref{spectrum_Q}]
	Let us define the polynomial $p_Q: s \mapsto  -(1 + \theta) + s + \theta s^{K-1}$. Since $Q$ is a circulant matrix, we can use \cite[Thm.\ 3.2.2]{davis_circulant} to diagonalize $Q$ in the following way
	\[ Q = F_K \operatorname{Diag}(\lambda_0, \lambda_1, \dots, \lambda_{K-1}) F_K^{\star},\]
	where $F_K$ is the Fourier matrix defined by (\ref{eq:FourierMatrix}) and
	\begin{align*}
		\lambda_k &= p_Q\big(\mathrm{e}^{\mathrm{i} \frac{2 k \pi}{K}}\big) = -(1 + \theta) + \mathrm{e}^{\mathrm{i} \frac{2 k \pi}{K}} + \theta \left(\mathrm{e}^{\mathrm{i} \frac{2 k \pi}{K}}\right)^{K-1}\\
		&= -(1 + \theta) \left[ 1 - \cos\left( \frac{2 \pi k}{K} \right) \right] + \mathrm{i} (1- \theta) \sin \left( \frac{2 \pi k}{K} \right)\\
		&= -2 (1 + \theta) \sin^2\left( \frac{\pi k}{K} \right) + \mathrm{i} (1- \theta) \sin \left( \frac{2 \pi k}{K} \right),\\
	\end{align*}
	for $k = 0,1,\dots, K-1$.
\end{proof}

\begin{rmk}[Eigenvalues of $Q$]
	Note that
	\(
	\frac{[\Re (\lambda_k) + (1 + \theta)]^2}{(1 + \theta)^2} + \frac{[\Im (\lambda_k)]^2}{(1 - \theta)^2} = 1,
	\)
	for all $\theta \neq 1$, where $\Re(\lambda_k)$ and $\Im(\lambda_k)$ are the real and the imaginary parts of $\lambda_k$, respectively, for $k = 0,1,\dots, K-1$. Thus, all the eigenvalues $\lambda_k$ are on the ellipse with center $(0,-(1+\theta))$ and  equation $$\frac{(x + 1 + \theta)^2}{(1 + \theta)^2} + \frac{y^2}{(1 - \theta)^2} = 1.$$
	Of course, for $\theta = 1$, since the matrix $Q$ is symmetric, all the eigenvalues are real.
	
	Also, the \textit{second largest eigenvalue in modulus} ($\mathrm{SLEM}$) of $Q$, denoted by $\rho_K$, is given by (\ref{def:rhoK}) and it is reached for $-\Re(\lambda_1)$ and $-\Re(\lambda_{K-1})$. 
	The minimum of $\Re(\lambda_k)$ is reached for $\Re(\lambda_{K/2})$, if $K$ is even, and for $\Re(\lambda_{(K-1)/2})$ and $\Re(\lambda_{(K+1)/2})$, if $K$ is odd.
\end{rmk}

\subsection{Proof of Theorem \ref{thm1}}

\begin{proof}[Proof of Theorem \ref{thm1}]
	
	We know that $ Q  = F_K \Lambda F_K^{\star}$. Therefore
	$ \mathrm{e}^{tQ} = F_K \mathrm{e}^{t \Lambda} F_K^{\star}$,
	and it follows that
	\begin{equation*}
		\mathrm{e}^{tQ} = \sum_{k=0}^{K-1} \mathrm{e}^{\lambda_k t} F_K U_k F_K^{\star} = \sum_{k=0}^{K-1} \mathrm{e}^{\lambda_k t} \Omega_k,
	\end{equation*}
	where $U_k, \; 0 \leq k \leq K-1$, is the $K \times K$ matrix with $[U_k]_{k,k} = 1$ and $0$ elsewhere,
	and $\Omega_k$ is defined as $\Omega_k = F_K U_k F_K^{\star}$. In fact, $\Omega_k$ is the symmetric circulant matrix satisfying
	$[\Omega_k]_{r,c} = \frac{1}{K} \omega^{k(r-c)}$,
	for all $0 \leq r,c \leq K-1$ and for every $k \in \{0,1,\dots, K-1\}$.
	In particular $[\Omega_0]_{r,c} = \frac{1}{K}$ for all $0 \leq r,c \leq K-1$, and $\Omega_k \, \Omega_l = \mathbf{0}$, for all $k \neq l$. 
	Then, for two probability measures $\mu$ and $\nu$ on $\{0,1,\dots,K-1\}$ we have
	\begin{equation}\label{eq:cond_Omega_0}
		(\mu-\nu) \Omega_0 = \mathbf{0}
	\end{equation}
	and therefore
	\begin{equation} \label{eq:orthogonal_descomp}
		(\mu - \nu) \mathrm{e}^{tQ} = \sum_{k=1}^{K-1} \mathrm{e}^{\lambda_k t} (\mu-\nu) \Omega_k.
	\end{equation}
	
	Let us denote by $\langle \cdot, \cdot \rangle$ the usual inner product in $\mathbb{C}$ and for a matrix $A$ let us denote by $A^T$ its transpose. Note that for every $K$-dimensional vector $\mathbf{x}$ and $k \neq l$ we have 
	\[
		\langle \mathbf{x} \, \Omega_k, \; \mathbf{x} \, \Omega_l \rangle = \mathbf{x} \, \Omega_k \big[\big(\Omega_l^T\big)^{\star}\big] \, (\mathbf{x}^{\star})^T = \mathbf{x} \, \Omega_k \Omega_l \, (\mathbf{x}^{\star})^T = 0. 
	\]	
	Thus, the set of vectors $\left( \mathbf{x} \Omega_k \right)_{k = 1}^{K-1}$ are orthogonal in $(\mathbb{C}, \langle \cdot, \cdot \rangle)$. Now, using (\ref{eq:orthogonal_descomp}) and Pythagoras' theorem we have
	\begin{eqnarray*}
		\left\| (\mu - \nu) \mathrm{e}^{tQ} \right\|_2^2 &=& \sum_{k=1}^{K-1} \left\|\mathrm{e}^{\lambda_k t} (\mu-\nu) \Omega_k\right\|^2_2 \nonumber\\
		&=& \sum_{k=1}^{K-1} \mathrm{e}^{2 \Re(\lambda_k) t} \left\| (\mu-\nu) \Omega_k\right\|^2_2. \label{eq:sum_riemman}
	\end{eqnarray*}
	Since $\rho_K = - \max\limits_{k=1,\dots,K-1} \Re(\lambda_k)$ we obtain
	\begin{eqnarray*}
		\left\| (\mu - \nu) \mathrm{e}^{tQ} \right\|_2^2 &\le& \mathrm{e}^{-2 \rho_K t} \sum\limits_{k=1}^{K-1} \left\| (\mu-\nu) \Omega_k\right\|^2_2 \nonumber \\
		& = & \mathrm{e}^{-2 \rho_K t} \left\| \sum\limits_{k=1}^{K-1}  (\mu-\nu) \Omega_k\right\|^2_2 \nonumber \\
		& = & \mathrm{e}^{-2 \rho_K t} \left\| \sum\limits_{k=0}^{K-1}  (\mu-\nu) \Omega_k\right\|^2_2 \nonumber \\
		& = & \mathrm{e}^{-2 \rho_K t} \|\mu - \nu\|_2^2. \label{ineq:norm2Upperbound}
	\end{eqnarray*}
	Note that the first equality holds due the Pythagoras' theorem, the second one uses (\ref{eq:cond_Omega_0}) and the last one uses the fact that
	\[
	\sum\limits_{k=0}^{K-1}  (\mu-\nu) \Omega_k = \mu - \nu.
	\]
	 Note that the upper bound in (\ref{exp_bound_TVdistrib}) is proved using the \emph{Cauchy\,–-\,Schwarz inequality}, which implies
	\[
	\left\|  \Delta_t(\mu,\nu) \right\|_{\mathrm{TV}} \le \frac{\sqrt{K}}{2}  \left\| \Delta_t(\mu,\nu) \right\|_{2},
	\]
	where $\Delta_t(\mu,\nu)$ is as defined in the statement of Theorem \ref{thm1}, and the inequality holds for every pair of distributions $\nu$ and $\mu$ on $\mathbb{Z}/K\mathbb{Z}$, and for all $t \ge 0$.
	
	To prove the lower bounds in (\ref{exp_bound_norm_2distrib}) and (\ref{exp_bound_TVdistrib}) we recall the the $r$-norm of a function $f$ on $\mathbb{Z}/K \mathbb{Z}$, allows the following characterization:
	\[
	\| f \|_r = \max_{g} \frac{|\langle f, g \rangle|}{\|g\|_q},
	\]
	where $q \in [1, \infty]$ is the conjugate of $r \in [1, \infty]$, i.e.\ $1/r + 1/q = 1$, and the maximum runs over all the functions on $\mathbb{Z}/K \mathbb{Z}$.
	Now, take $g : k \in \mathbb{Z}/K \mathbb{Z} \mapsto \frac{1}{\sqrt{K}} (\omega_K)^k$ as a test function, where $\omega_K = \mathrm{e}^{\frac{2 \pi}{K} \mathrm{i}}$.
	Note that viewed as a column vector, $g$ is equal to the last column of the Fourier matrix $F_K$.
	Then, $g$ is a right eigenfuntion of $Q$ with associated eigenvalue $-\rho_K$.
	Moreover,  $\|g\|_2 = 1$ and $\|g\|_{\infty} = 1/\sqrt{K}$. 
	Therefore,
	\begin{align*}
	\| \nu \mathrm{e}^{t Q} - \mu \mathrm{e}^{t Q} \|_2 & \ge \frac{|\langle \nu \mathrm{e}^{t Q} - \mu \mathrm{e}^{t Q}, g \rangle | }{\|g\|_2} = \left| \varphi_{\nu}\left( \frac{2 \pi}{K} \right) - \varphi_{\mu}\left( \frac{2 \pi}{K} \right) \right| \mathrm{e}^{- \rho_K t},\\
	\left\| \nu \mathrm{e}^{t Q}- \mu \mathrm{e}^{t Q} \right\|_{\mathrm{TV}} & \ge \frac{|\langle \nu \mathrm{e}^{t Q} - \mu \mathrm{e}^{t Q}, g \rangle | }{2 \|g\|_\infty} =    \frac{\sqrt{K}}{2} \left| \varphi_{\nu}\left( \frac{2 \pi}{K} \right) - \varphi_{\mu}\left( \frac{2 \pi}{K} \right) \right| \mathrm{e}^{- \rho_K t},\\
	\end{align*}
	To prove (\ref{exp_bound_2norm}) first note that the $2$-distance and the total variation distances satisfy
	\begin{eqnarray*}
		\mathrm{D}_2(t) &=& \max_{ k \in \mathbb{Z}/K \mathbb{Z} } \left\| \mathcal{L}_{k}(Z_t \mid  t < \tau_p) - \nu_{\mathrm{qs}} \right\|_{2},\\
		\mathrm{D}_{\mathrm{TV}}(t) &=& \max_{ k \in \mathbb{Z}/K \mathbb{Z} } \left\| \mathcal{L}_{k}(Z_t \mid  t < \tau_p) - \nu_{\mathrm{qs}} \right\|_{\mathrm{TV}},
	\end{eqnarray*}
	which is a consequence of the convexity of these distances.
	Thus, the upper bounds in expression (\ref{exp_bound_2norm}) and (\ref{exp_bound_TVnorm}) are consequence of the equality $\|\delta_k - \nu_{\mathrm{qs}} \|_2 = \sqrt{\frac{K - 1}{K}}$. 
	The lower bounds in (\ref{exp_bound_2norm}) and (\ref{exp_bound_TVnorm}) is obtained using that $\varphi_{\nu_{\mathrm{qs}}}(2 \pi/K) = 0$ and $\varphi_{\delta_k}(2 \pi/K) = |g(k)| = 1/\sqrt{K}$, for every $k \in \mathbb{Z}/K \mathbb{K}$.

\end{proof}

\section{Covariances of the proportions of particles under the stationary distribution}\label{sec:covariances_stationary}

The following lemma gives us informations about the invariance of the generator $\mathcal{L}_{K,N}$, defined in (\ref{generator1}), by the rotation function $\phi$ defined in (\ref{def_rotation}).

\begin{lem}[Rotation invariance of the generator]\label{lema_rotation_generator}
	The generator $\mathcal{L}_{K,N}$ of $(\eta_t^{(N)})_{t \ge 0}$ satisfies
	\begin{equation} \label{invariance_rotations}
		\mathcal{L}_{K,N} \mathbbm{1}_{\eta} = \mathcal{L}_{K,N} \mathbbm{1}_{\phi(\eta)} \circ \phi,
	\end{equation}
	for every $\eta \in \mathcal{E}_{K,N}$.
\end{lem}

\begin{proof}
	Note that
	\begin{equation}\label{generator_lema_1}
		(\mathcal{L}_{K,N} \mathbbm{1}_{\eta}) (\eta') = \eta'(i) \left( \mathbbm{1}_{\{ j = i + 1\}} + \theta \mathbbm{1}_{\{ j = i - 1\}} + p \frac{\eta'(j)}{N-1} \right),
	\end{equation}
	if $\eta = T_{i \rightarrow j} \eta'$, for some $i,j \in \mathbb{Z}/K \mathbb{Z}$, and it is null otherwise.
	Now, if $\eta = T_{i \rightarrow j} \eta'$, then we have $\phi(\eta) = T_{(i+1) \rightarrow (j+1)} \phi(\eta')$. Thus,
	\begin{equation}\label{generator_lema_2}
		\left(\mathcal{L}_{K,N} \mathbbm{1}_{\phi(\eta)} \right) (\phi(\eta')) = \phi(\eta')(i+1) \left( \mathbbm{1}_{\{ j = i + 1\}} + \theta \mathbbm{1}_{\{ j = i - 1\}} +p \frac{ \phi(\eta')(j+1)}{N-1} \right).
	\end{equation}
	Using (\ref{generator_lema_1}) and (\ref{generator_lema_2}) we can see that (\ref{invariance_rotations}) holds, since $\eta'(i) = \phi(\eta')(i+1)$ and $\eta(j) = \phi(\eta)(j+1)$.
\end{proof}

\subsection{Proof of Theorem \ref{big_theo}}

We will now prove Theorem \ref{big_theo}, which describes some properties of $\nu_N$, the stationary distribution of the $N$-particle process $\big(\eta_t^{(N)}\big)_{t \ge 0}$.

\begin{proof}[Proof of Theorem \ref{big_theo}]
	\begin{itemize}
		\item[]
		\item[a)] \emph{The process $\big(\eta_t^{(N)}\big)_{t \ge 0}$ is not reversible, except when $K=3$ and $\theta = 1$.}
		
		For $K=3$ and $N \geq 2$, let us consider the three states in $\mathcal{ E}_{3,N}$,
		\[
		\eta_1 = [N  ,0,0],\;\;\;
		\eta_2 = [N-1,1,0],\;\;\;
		\eta_3 = [N-1,0,1].
		\]
		It is straightforward to verify that
		\[	
		\begin{array}{lll}
			(\mathcal{L}_{K,N} \mathbbm{1}_{\eta_2}) (\eta_1) = N, & (\mathcal{L}_{K,N} \mathbbm{1}_{\eta_3}) (\eta_1) = N\theta, & (\mathcal{L}_{K,N} \mathbbm{1}_{\eta_1}) (\eta_2) =p+\theta, \\
			(\mathcal{L}_{K,N} \mathbbm{1}_{\eta_3}) (\eta_2) = 1, & (\mathcal{L}_{K,N} \mathbbm{1}_{\eta_1}) (\eta_3) =p+1, & (\mathcal{L}_{K,N} \mathbbm{1}_{\eta_2}) (\eta_3) = \theta.
		\end{array}
		\]
		Moreover,
		\begin{align*}
			(\mathcal{L}_{K,N} \mathbbm{1}_{\eta_1}) (\eta_3) \cdot 
			(\mathcal{L}_{K,N} \mathbbm{1}_{\eta_2}) (\eta_1) \cdot 
			(\mathcal{L}_{K,N} \mathbbm{1}_{\eta_3}) (\eta_2) &= (p+1)N, \\
			(\mathcal{L}_{K,N} \mathbbm{1}_{\eta_3}) (\eta_1) \cdot
			(\mathcal{L}_{K,N} \mathbbm{1}_{\eta_2}) (\eta_3) \cdot
			(\mathcal{L}_{K,N} \mathbbm{1}_{\eta_1}) (\eta_2) &= N \theta^2(p + \theta),
		\end{align*}
		the Kolmogorov cycle reversibility criterion, see \cite[Thm.\ 1.8]{kelly-1979},
		is not satisfied unless $\theta=1$. Indeed, note that a necessary
		condition to have reversibility is that the polynomial
		$$\alpha(\theta) = \theta^3 + p (N-1) \theta^2 - p(N-1) - 1 = (\theta -
		1)[\theta^2 + (\theta + 1)(p + 1)]$$ is equal to zero. Now, since
		$\theta^2 + (\theta + 1)(p + 1) > 0$ for all $\theta \ge 0$, the
		polynomial $\alpha(\theta)$ only has one positive root, which is
		$\theta = 1$.
		
		For $K\geq 4, \;N \geq 2$ and $p>0$, let us consider the two states in
		$\mathcal{ E}_{K,N}$: $\eta_1 = [N,0,\dots,0]$ and
		$\eta_2 = [N-1,0,1,\dots,0]$. Because
		$(\mathcal{L}_{K,N} \mathbbm{1}_{\eta_2}) (\eta_1)= 0$ and
		$(\mathcal{L}_{K,N} \mathbbm{1}_{\eta_1}) (\eta_2) =p \neq 0$, the detailed
		balanced property for a reversible process, see \cite[Thm.\
		1.3]{kelly-1979},
		$\nu_N(\eta_1) (\mathcal{L}_{K,N} \mathbbm{1}_{\eta_2}) (\eta_1) = \nu_N(\eta_2)
		(\mathcal{L}_{K,N} \mathbbm{1}_{\eta_1}) (\eta_2)$, is not satisfied.
		
		Therefore, $a)$ is proved except in the special case $K=3$, $N \geq 2$ and
		$\theta = 1$. Note that in this case the model is a complete graph model,
		which was proved to be reversible in \cite[Thm.\ 2.4]{cloez2016fleming}.

		\item[b)] \emph{The stationary distribution $\nu_N$ is invariant by rotation.}

		Since $\nu_N$ is the unique stationary distribution of $\big(\eta_t^{(N)}\big)_{t \ge 0}$,  we know that $\nu_N (\mathcal{L}_{K,N} f) = 0$ for every function $f$ on $\mathcal{E}_{K,N}$.
		Thus, in order to prove that $\nu_N$ is invariant by rotation, it is sufficient to prove that $\nu_N \circ \phi$ also satisfies $(\nu_N \circ \phi) (\mathcal{L}_{K,N} f) = 0$ for every function $f$ on $\mathcal{E}_{K,N}$. Since $\mathcal{E}_{K,N}$ is finite, it is enough to consider the indicator functions $\mathbbm{1}_{\eta}$, for every $\eta \in \mathcal{E}_{K,N}$. Using Lemma \ref{lema_rotation_generator}, we have
		\[
			(\nu_N \circ \phi) (\mathcal{L}_{K,N} \mathbbm{1}_{\eta}) = \nu_N \left( \mathcal{L}_{K,N} \mathbbm{1}_{\eta} \circ \phi^{-1} \right) = \nu_{N} \left( \mathcal{L}_{K,N} \mathbbm{1}_{\phi(\eta)} \right) = 0,
		\]
		for every $\eta \in \mathcal{E}_{K,N}$, where the second equality holds due to (\ref{invariance_rotations}) and the third is due to the fact that $\nu_N$ is stationary for $\mathcal{L}_{K,N}$. Consequently, by the uniqueness of the stationary distribution, we have $\nu_N = \nu_N \circ \phi$. The result trivially holds for any rotation $\phi^{(l)}$, $l \ge 1$.
		
		\item[c)] \emph{Mean of the proportion of particles in each state.}
		
		Using part $b)$ we have
		\( \mathbb{E}_{\nu_{N}}[\eta(0)] = \mathbb{E}_{\nu_{N}}[\phi^{(k)}(\eta)(0)] = \mathbb{E}_{\nu_{N}}[\eta(k)]\),	for all $k =0,1,\dots, K-1$. Also, we know that $\eta(0) + \eta(1) + \dots + \eta(K-1) = N$. Thus, $\mathbb{E}_{\nu_{N}}[\eta(k)] = \displaystyle \frac{N}{K}$, for all $k =0,1,\dots, K-1$.
	\end{itemize}
\end{proof}

Let us define the functions $f_k$ and $f_{k,l}$ on $\mathcal{E}_{K,N}$ as $f_k: \eta \mapsto \eta(k)$ and $f_{k,l}: \eta \mapsto \eta(k) \eta(l)$, for all $k,l \in \{0,1, \dots, K-1\}$.
The following lemma provides explicit expressions for the evaluation of the generator of the $N$-particle process on  these functions.

\begin{lem}[Dynamics of the $N$-particle process]\label{teo_dynam}
	We have that
	\begin{eqnarray}
		\mathcal{L}_{K,N} f_k  &=& f_{k-1} - (1+ \theta) f_k + \theta f_{k+1},\label{exprLBk}\\
		\mathcal{L}_{K,N} f_{k,k}  &=& 2 \left[  f_{k-1,k} - \left(1 + \theta + \frac{p}{N-1} \right) f_{k,k} + \theta f_{k,k+1} \right]  \nonumber \\
		& & + f_{k-1} + \left(1 + \theta + \frac{2 p N }{N-1} \right) f_k + \theta f_{k+1},\label{exprLBkk}\\
		\mathcal{L}_{K,N} f_{k, k+1} &=& - 2 \left(1 + \theta + \frac{p}{N-1} \right) f_{k,k+1} + f_{k-1, k+1} + \theta f_{k+1,k+1} + f_{k,k} + \theta f_{k, k+2}  \nonumber \\
		& &  - f_{k} - \theta f_{k+1},\label{exprLBkl}\\
		\mathcal{L}_{K,N} f_{k, l} &=&  - 2\left(1 + \theta + \frac{p}{N-1}\right) f_{k,l} + f_{k-1, l} + \theta f_{k+1,l} + f_{k,l-1} + \theta f_{k, l+1},\label{exprLBkl1}
	\end{eqnarray}
	for all $k,l \in \mathbb{Z}/K \mathbb{Z}$ such that $|k-l| > 2$.
\end{lem}

The proof of Lemma \ref{teo_dynam} is mostly technical and it is deferred to \ref{app:generator}. The expression (\ref{exprLBk}) given by this lemma is used to study the behavior of the mean of the proportion of particles in each state. Also, (\ref{exprLBkk}), (\ref{exprLBkl}) and (\ref{exprLBkl1}) are used to study the covariances of the number of particles when $t$ and $N$ tend to infinity.

Let us denote
\begin{eqnarray}
	s_k &=& \mathbb{E}_{\nu_N}\left[\frac{f_{l, l+k}(\eta)}{N^2}\right] = \mathbb{E}_{\nu_N}\left[\frac{f_{0,k}(\eta)}{N^2}\right] = \mathbb{E}_{\nu_N}\left[\frac{\eta(0)}{N} \frac{\eta(k)}{N} \right], \label{sl_def}
\end{eqnarray}
for all $k, l \in \mathbb{Z}/K \mathbb{Z}$. Note that the second equality comes from part $b)$ of Theorem \ref{big_theo}. 
Let us define the constant
\begin{equation} \label{eq:params_gamma}
	\gamma_N = -2 \left( 1+  \frac{N p}{(N-1)(1+\theta)} \right).
\end{equation}
The following two lemmas will be useful for obtaining explicit expressions for the quantities $s_k$, for $k=0,1,\dots,K-1$.

\begin{lem}\label{lemma33}
	Then, for $K \geq 3$, the values $s_k$, for $0 \leq k \leq K-2$, satisfy the following linear system:
	\begin{eqnarray}
		- s_{K-1} + \beta_N \, s_0 - s_1 &=& - \frac{\gamma_N}{K N},  \label{ec1} \\
		- s_0  + \beta_N \, s_1  - s_2  &=& - \frac{1}{K N}, \label{ec2}
	\end{eqnarray}
	and when $K \ge 4$:
	\begin{equation}
		- s_{l-1} + \beta_N \, s_l - s_{l+1} = 0,  \label{ec3}
	\end{equation}
	for $2 \leq l \leq K-2$, where $\beta_N$ and $\gamma_N$ are defined by (\ref{eq:params}) and (\ref{eq:params_gamma}), respectively.
\end{lem}

\begin{proof}[Proof of Lemma \ref{lemma33}]
	Using (\ref{exprLBkk}) we have
	\begin{eqnarray*}
		\mathbb{E}_{\nu_N}\big[(\mathcal{L}_{K,N} f_{k,k})(\eta)\big] &=& 2 \left[ \mathbb{E}_{\nu_N}\big[ f_{k-1,k}(\eta)\big] - \left(1 + \theta + \frac{p}{N-1} \right) \mathbb{E}_{\nu_N}\big[ f_{k,k}(\eta)\big] + \theta \mathbb{E}_{\nu_N}\big[ f_{k,k+1}(\eta)\big] \right] +  \nonumber \\
		& & \mathbb{E}_{\nu_N}\big[ f_{k-1}(\eta)\big] + \left(1 + \theta + \frac{2 p N }{N-1} \right) \mathbb{E}_{\nu_N}\big[ f_{k}(\eta)\big] + \theta \mathbb{E}_{\nu_N}\big[ f_{k+1}(\eta)\big].
	\end{eqnarray*}
	Since $\nu_N$ is the stationary distribution, we know that $\mathbb{E}_{\nu_N}\big[(\mathcal{L}_{K,N} f)(\eta)\big] = 0$, for all $f$ on $\mathcal{E}_{K,N}$.
	Thus, using parts $a)$ and $b)$ of Theorem \ref{big_theo} and dividing by $N^2$, we have the equality
	\[
	2 (1+\theta) s_1 - 2 \left( 1+ \theta + \frac{p}{N-1} \right) s_0 = - \frac{2}{KN} \left( 1+ \theta + \frac{pN }{N-1} \right).
	\]
	Dividing by $(1+\theta)$, this last equality is equivalent to
	\begin{equation}\label{ec1-aux}
		\beta_N \, s_0 - 2 s_1 = - \frac{\gamma_N}{KN}.
	\end{equation}
	Note that $s_1 = s_{K-1}$ due to Corollary \ref{lemma3}. 
	Using this fact, we deduce that (\ref{ec1-aux}) is equivalent to (\ref{ec1}).

	Furthermore, using (\ref{exprLBkl}) we get
	\begin{eqnarray*}
		\mathbb{E}_{\nu_N}\big[(\mathcal{L}_{K,N} f_{k,k+1})(\eta)\big] &=&
		- 2 \left(1 + \theta + \frac{p}{N-1} \right) \mathbb{E}_{\nu_N}\big[ f_{k,k+1}(\eta)\big] + \mathbb{E}_{\nu_N}\big[ f_{k-1,k+1}(\eta)\big]  \\
		& & + \theta \; \mathbb{E}_{\nu_N}\big[ f_{k+1,k+1}(\eta)\big] + \mathbb{E}_{\nu_N}\big[ f_{k,k}(\eta)\big] + \theta \; \mathbb{E}_{\nu_N}\big[ f_{k,k+2}(\eta)\big]   \\
		& & - \mathbb{E}_{\nu_N}\big[ f_{k}(\eta)\big] -\theta \; \mathbb{E}_{\nu_N}\big[ f_{k+1}(\eta)\big].
	\end{eqnarray*}
	In a similar way to the previous case we obtain the equation 
	\(
		- s_0 + \beta_N s_1 - s_2 = - {1}/{KN},
	\)
	which is equivalent to (\ref{ec2}).
	
	Similarly, using (\ref{exprLBkl1}), the equality (\ref{ec3}) is proved for all $2 \le l \le K-2$.
\end{proof}
Note that using Corollary \ref{lemma3} and formula (\ref{ec2}) we can obtain the following relation
\begin{equation}\label{eq-K-1}
	-s_{K-2} + \beta_N s_{K-1} - s_0 = - \frac{1}{KN}.
\end{equation}

Let us define the $K \times K$ circulant matrix $A_K$ and the $K$-vector $\mathbf{b}_K$ by
\begin{align*}
	A_K &= \operatorname{circ}(\beta_N, -1, 0, \dots, \dots, 0, -1),\\
	\mathbf{b}_K &= \left(\gamma_N, 1, 0, 0, \ldots, 0, 1 \right)^T,
\end{align*}
for $K \ge 3$, where $\beta_N$ and $\gamma_N$ are defined by (\ref{eq:params}) and (\ref{eq:params_gamma}), respectively.

Using Equations (\ref{ec1}), (\ref{ec2}), (\ref{ec3}) and (\ref{eq-K-1}), the quantities $s_k, \; 0 \leq k \leq K-1$, defined in (\ref{sl_def}) are proved to verify the linear system of equations
\begin{equation} \label{eq:SK}
	A_K \; \mathbf{s}_K = - \frac{1}{K N} \mathbf{b}_K,
\end{equation}
where
\(
\mathbf{s}_K = (s_0, s_1, \dots , s_{K-1})^T \nonumber
\)
and $\beta_N$ and $\gamma_N$ are defined by (\ref{eq:params}).

Note that the vector $\mathbf{b}_K$ is  {\em almost symmetric}, in the sense that    $b_k = b_{K-k}, \; 1 \leq k \leq K-1$, where $b_k, 0 \leq k \leq K-1$, are the $K$ components of $\mathbf{b}_K$. 
Moreover, a vector $\mathbf{b}$ is almost symmetric if and only if the equality $J \mathbf{b} = \mathbf{b}$ holds, where
\[
J = \left(
\begin{array}{cccccc}
	1 & 0 & 0 &  \dots & 0 & 0 \\
	0 & 0 & 0 & \dots  & 0 & 1 \\
	0 & 0 & 0 &   \iddots&  1 & 0 \\
	\vdots & \vdots & \iddots & \iddots & \vdots & \vdots \\
	0 & 0 & 1 & \dots & 0 & 0 \\
	0 & 1 & 0 & \dots & 0 & 0 \\
\end{array}
\right).
\]
In addition, any symmetric circulant matrix of size $n$ can be expressed as follows
\[
A = a_0 I + a_1 \Pi + a_2 \Pi^2 + \dots + a_{n-1} \Pi^{n-1},
\]
where $(a_0,a_1, \dots, a_{n-1})$ is an almost symmetric vector and $\Pi = \operatorname{circ}(0,1,0,\dots, 0)$.

The following result gives us information about the solution of a symmetric circulant system when the vector of constant terms is almost symmetric.
\begin{pro}[Circulant matrices]\label{prop1}
	Let $A$ be a $n$-dimensional invertible circulant symmetric matrix and let $\mathbf{b}$ be an almost symmetric vector of dimension $n$, then $\mathbf{x} = A^{-1} \mathbf{b}$, the solution of the linear system $A \mathbf{x} =\mathbf{b}$, is an almost symmetric vector.
\end{pro}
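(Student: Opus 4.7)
My plan is to reformulate the almost-symmetric condition as an invariance under the involution $J$, and then exploit the fact that $J$ commutes with every symmetric circulant matrix. First I observe that $J$ is an involution (i.e.\ $J^2 = I$, since flipping twice is the identity), and that $\mathbf{b}$ being almost symmetric, namely $b_k = b_{n-k}$ for $1 \le k \le n-1$, is precisely the statement $J \mathbf{b} = \mathbf{b}$. Analogously, proving that $\mathbf{x}$ is almost symmetric is equivalent to proving $J \mathbf{x} = \mathbf{x}$.

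The central step is to establish that $JA = AJ$ for every symmetric circulant matrix $A$. Using the representation $A = a_0 I + a_1 \Pi + \cdots + a_{n-1} \Pi^{n-1}$ with $\Pi = \operatorname{circ}(0,1,0,\ldots,0)$ (where the symmetry of $A$ translates into $a_k = a_{n-k}$ for $1 \le k \le n-1$, i.e.\ the vector $(a_0,\ldots,a_{n-1})$ is almost symmetric, as noted just before the statement), it suffices to check the conjugation identity $J \Pi J = \Pi^{-1}$. This is an elementary computation on the canonical basis: $\Pi e_k = e_{k+1 \bmod n}$ while $J e_0 = e_0$ and $J e_k = e_{n-k}$ for $k \ge 1$, so one verifies case by case that $J \Pi J e_k = e_{k-1 \bmod n} = \Pi^{-1} e_k$. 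Conjugating the expansion of $A$ then yields $JAJ = \sum_{k=0}^{n-1} a_k \Pi^{-k} = a_0 I + \sum_{l=1}^{n-1} a_{n-l} \Pi^l$, and the symmetry condition $a_{n-l} = a_l$ gives $JAJ = A$, i.e.\ $AJ = JA$.

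With the commutation in hand the conclusion is immediate: from $A \mathbf{x} = \mathbf{b}$ and $J \mathbf{b} = \mathbf{b}$ I get $A(J\mathbf{x}) = JA\mathbf{x} = J\mathbf{b} = \mathbf{b} = A\mathbf{x}$, and invertibility of $A$ forces $J\mathbf{x} = \mathbf{x}$, which is the almost-symmetry of $\mathbf{x}$. The only non-routine point is the commutativity $JA = AJ$, and as sketched it reduces to the basic conjugation relation $J\Pi J = \Pi^{-1}$; everything else is bookkeeping.
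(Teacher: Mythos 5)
Your proof is correct and follows essentially the same route as the paper: both reduce almost-symmetry to invariance under the involution $J$, establish $JAJ = A$ via the expansion $A = \sum_k a_k \Pi^k$ and the conjugation relation $J\Pi J = \Pi^{T} = \Pi^{-1}$, and conclude by uniqueness of the solution that $J\mathbf{x} = \mathbf{x}$. The only cosmetic difference is that the paper phrases the last step as $JAJ = A^T = A$ while you invoke the almost-symmetry of the coefficient vector $(a_0,\dots,a_{n-1})$; these are equivalent.
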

\begin{proof}
	Since $A$ is a invertible matrix, we know that $\mathbf{x}$ is the unique vector of dimension $n$ satisfying $A\mathbf{x} = \mathbf{b}$ and this vector $\mathbf{x}$ is almost symmetric if and only if $\mathbf{x} = J \mathbf{x}$. So, it is sufficient to prove that $J \mathbf{x}$ is also a solution of the linear system, i.e.\ $A (J\mathbf{x}) = \mathbf{b}$.  Since $\mathbf{b}$ is almost symmetric, the equation $A (J \mathbf{x}) = \mathbf{b}$ becomes equivalent to
	\begin{equation}\label{eq:AJAX}
		J A (J \mathbf{x}) = \mathbf{b}.
	\end{equation}
	It is sufficient to prove that $JAJ = A$.
	Note that the matrix $J$ is an involutory matrix, i.e.\ $J^{-1} = J$, and
	\begin{eqnarray*}
		J A J &=& J \big(a_0 I + a_1 \Pi + a_2 \Pi^2 + \dots + a_{n-1} \Pi^{n-1}\big)J\nonumber\\
		&=& a_0 I + a_1 J \Pi J + a_2 J \Pi^2 J + \dots + a_{n-1} J \Pi^{n-1} J. 
	\end{eqnarray*}
	
	The matrix $\Pi$ is orthogonal, satisfying $\Pi^{-1} = \Pi^T$. Moreover,
	\begin{eqnarray*}
		J \Pi J &=& J (\Pi J) = J
		\left(
		\begin{array}{cccccc}
			0 & 0 & \dots & 0 & 0 & 1 \\
			0 & 0 & \dots  & 0 & 1 & 0 \\
			0 & 0 & \dots & 1 & 0 & 0 \\
			\vdots & \vdots & \iddots & \iddots & \vdots & \vdots \\
			0 & 1 & 0 & 0 & 0 & 0 \\
			1 & 0 & 0 & 0 & 0 & 0 \\
		\end{array}
		\right) = \Pi^T,
	\end{eqnarray*}
	which implies
	\(
		J \Pi^n J = J \Pi J^2 \Pi^{n-1} J = \Pi^T J \Pi^{n-1} J = \dots = \big(\Pi^T\big)^n.
	\)
	Thus, we get
	\begin{eqnarray*}
		J A J &=& a_0 I + a_1 \Pi^T + a_2 \big(\Pi^T\big)^2 + \dots + a_{n-1} \big(\Pi^T\big)^{n-1}\\
		&=& \big( a_0 I + a_1 \Pi + a_2 (\Pi)^2 + \dots + a_{n-1} (\Pi)^{n-1} \big)^T\\
		&=& A^T = A.
	\end{eqnarray*}
	Thus, (\ref{eq:AJAX}) holds and hence $J\mathbf{x}$ is solution of the equation $A \mathbf{x} = \mathbf{b}$. By uniqueness of the solution we get $\mathbf{x} = J\mathbf{x}$, proving that $\mathbf{x}$ is almost symmetric.
\end{proof}

Because the $K \times K$ matrix $A_K$ in (\ref{eq:SK}) is a symmetric circulant matrix, it is possible to obtain explicit formulas for all its eigenvalues and eigenvectors using \cite[Thm.\ 3.2.2]{davis_circulant}. Since all its eigenvalues are non-null, we conclude that the matrix $A_K$ is invertible. Thus, using Proposition \ref{prop1}, the linear system (\ref{eq:SK}) has as its unique solution the vector $\mathbf{s}_K$, which is almost symmetric. In addition to its almost symmetry, the vector $\mathbf{b}_K$ satisfies $b_1=b_{K-1}$, $b_k = 0,\; 2 \leq k \leq K-2$. This simple structure of $\mathbf{b}_K$ allows us to deduce explicit expressions for $s_k, \; 0 \leq k \leq K-1$, given in Theorem \ref{thm:sk_expl}, which is proved below.

\subsection{Proof of Theorem \ref{thm:sk_expl}}\label{sec:thm:sk_expl}

Consider the four families of orthogonal
polynomials
$N_{\even,n}(x)$, $D_{\even, n}(x)$, $N_{\odd, n}(x)$,
$D_{\odd, n}(x)$,  $n\geq 0$, defined by
\[
\begin{array}{llll}
	N_{\even, 0}(x) = 2, & D_{\even, 0}(x) = 0, & N_{\odd, 0}(x) = 1, & D_{\odd, 0}(x) = 1 \\
	N_{\even, 1}(x) = x, & D_{\even, 1}(x) = x+2, & N_{\odd, 1}(x) = x-1,  & D_{\odd, 1}(x) = x+1, \\
\end{array}
\]
satisfying all of them the recurrence relation
\begin{equation}\label{recurrent_relation}
	p_{n+1}(x) = x \, p_{n}(x) - p_{n-1}(x),
\end{equation}
for all $n \ge 1$.

The next proposition will prove useful in the sequel.

\begin{lem}\label{prop:recurrence}
	The following relations hold, for all $n \geq 0$:
	\begin{eqnarray}
		2 N_{\even,\,n}(x) - x N_{\even,\,n+1}(x) + (x-2) D_{\even,\,n+1}(x) &=& 0,  \label{eq:relEven} \\
		2 N_{\odd,\,n}(x) - x N_{\odd,\,n+1}(x) + (x-2) D_{\odd,\,n+1}(x) &=& 0. \label{eq:relOdd}
	\end{eqnarray}
	Furthermore, we have the following identities involving the Chebyshev polynomials of first and second kind, for all $n \ge 0$:
	\begin{eqnarray}
		N_{\even,\,n}(x) &=& 2 \, T_n(x/2), \label{Neven_cheby}\\
		D_{\even,\,n}(x) &=& (x+2) \, U_{n-1}(x/2), \label{Deven_cheby} \\
		N_{\odd,\,n}(x) &=& U_n(x/2) - U_{n-1}(x/2), \label{Nodd_cheby}\\
		D_{\odd,\,n}(x) &=& U_n(x/2) + U_{n-1}(x/2).\label{Dodd_cheby}
	\end{eqnarray}
\end{lem}

\begin{proof}
	Setting
	$
	P_n(x) =  2 N_{\even, n}(x) - x N_{\even, n+1}(x) + (x-2) D_{\even, n+1}(x)$, for all $n\geq 0$,
	it follows from the definitions of $N_{\even, n}(x)$ and $D_{\even, n}(x)$ that
	$P_0(x) = 0, P_1(x)=0$ and $P_n(x)$ satisfies the recurrence relation (\ref{recurrent_relation}).
	Therefore $P_n(x) = 0$ for every $n \geq 0$ and (\ref{eq:relEven}) is proved.
	The proof of (\ref{eq:relOdd}) is similar.
	
	Now, note that the sequence of polynomials $(2 \, T_n(x/2))_{n \ge 0}$ satisfy the recurrence relation (\ref{recur_cheby}). Furthermore, $2 \, T_0(x/2) = 2 = N_{\even,\,0}(x)$ and $2 \, T_1(x/2) = x = N_{\even,\,1}(x)$. Consequently, identity (\ref{Neven_cheby}) is proved. Analogously, identities (\ref{Deven_cheby}), (\ref{Nodd_cheby}) and (\ref{Dodd_cheby}) are proved.
\end{proof}

We now prove Lemma \ref{lemma:sk_expl}, which provides explicit expressions for $s_k$, $k \in \{0,1,\dots, K-1\}$, in terms of the polynomials $N_{\even,n}(x)$, $D_{\even, n}(x)$, $N_{\odd, n}(x)$ and $D_{\odd, n}(x)$.

\begin{lem}[Explicit formulas for $s_k$] \label{lemma:sk_expl}
	The values of $s_k$, $0 \le k \le K-1$, are given by
	\begin{enumerate}
		\item[a)] If $K= 2 K_2$, $K_2 \geq 2$,
		\begin{eqnarray}
			& & s_0 =  \displaystyle \frac{N-1}{K N} \; \frac{N_{\even, K_2}(\beta_N)  }{D_{\even, K_2}(\beta_N) } + \frac{1}{K N} \label{eq:solEven0}
			, \\
			& & s_k =  \displaystyle \frac{N-1}{K N} \; \frac{N_{\even, K_2-k}(\beta_N)  }{D_{\even, K_2}(\beta_N) } ,
			\; 1 \leq k \leq K_2, \label{eq:solEven} \\
			& & s_{K-k} = s_k, \;  1 \leq k \leq K_2-1,\nonumber
		\end{eqnarray}
		\item[b)] If $K = 2 K_2 + 1$, $K_2 \geq 1$,
		\begin{eqnarray}
			& & s_0 =  \displaystyle \frac{N-1}{K N} \; \frac{N_{\odd, K_2}(\beta_N)  }{D_{\odd, K_2}(\beta_N) } + \frac{1}{K N}
			,\nonumber  \\
			& & s_k =  \displaystyle \frac{N-1}{K N} \; \frac{N_{\odd, K_2-k}(\beta_N)  }{D_{\odd, K_2}(\beta_N) } ,
			\; 1 \leq k \leq K_2, \nonumber  \label{eq:solOdd}  \\
			& & s_{K-k} = s_k, \;  1 \leq k \leq K_2,\nonumber
		\end{eqnarray}
	\end{enumerate}
	where $\beta_N$ is defined by (\ref{eq:params}).
\end{lem}

\begin{proof}
	We separate the proof into two cases: when $K$ is even and when $K$ is odd.
	
	$(a)$ When $K$ is even, say $K= 2 K_2$, Equation (\ref{eq:SK}) is equivalent to the following linear system for $s_k, \; 0 \leq k \leq K_2$,
	\begin{eqnarray}
		\beta_N s_0 - 2 s_1 &=& -\frac{1}{K N} \gamma_N , \label{eq:vers0} \\
		- s_0 + \beta_N s_1 - s_2 &=& -\frac{1}{K N}, \label{eq:vers1} \\
		-s_{k-1} + \beta_N s_{k} - s_{k+1} &=& 0,  \label{eq:versk}
	\end{eqnarray}
	for $2 \leq k \leq K_2-1$ and
	\begin{equation}
		\beta_N s_{K_2} - 2 s_{K_2-1} = 0.  \label{eq:versK2}
	\end{equation}
	
	Note that (\ref{eq:versK2}) follows from the equality $s_{K_2 - 1} = s_{K_2 + 1}$.
	
	Consider $A \in \mathbb{R}$ such that $s_{K_2} = 2 A = N_{\even, 0}(\beta_N) A$.
	Equation (\ref{eq:versK2}) implies
	\begin{equation*}
		s_{K_2-1} = A \beta_N = A N_{\even, 1}(\beta_N).
	\end{equation*}
	Equation (\ref{eq:versk}) may be written as
	\begin{equation}\nonumber
		s_{k-1} = \beta_N s_k - s_{k+1},
	\end{equation}
	for $2 \leq k \leq K_2-1$.
	This proves that $s_{k}$, for $k$ decreasing from $K_2$ to $1$, may be written
	\begin{equation}
		s_k = A N_{\even,K_2-k}(\beta_N). \nonumber
	\end{equation}
	
	From Equation (\ref{eq:vers1}), we get
	\begin{eqnarray}
		s_0 &=& \beta_N s_1 - s_2 + \frac{1}{K N} \nonumber \\
		&=& A \left[ \beta_N  N_{\even,K_2-1}(\beta_N) - N_{\even,K_2-2}(\beta_N) \right] + \frac{1}{K N} \nonumber \\
		&=& A N_{\even,K_2}(\beta_N) + \frac{1}{K N}. \label{eq:s01a}
	\end{eqnarray}
	
	Plugging (\ref{eq:s01a}) into Equation (\ref{eq:vers0}), we get
	\begin{eqnarray}
		A \left[ \beta_N N_{\even,K_2}(\beta_N) -2 N_{\even,K_2-1}(\beta_N) \right] &=&  - \frac{1}{K N}( \beta_N + \gamma_N) \nonumber \\
		&=& \frac{1}{K N} \; \frac{2p}{1+\theta}.\label{eqA1}
	\end{eqnarray}
	Using Equation (\ref{eq:relEven}) we get
	\begin{eqnarray}
		A[ \beta_N N_{\even,K_2}(\beta_N) -2 N_{\even,K_2-1}(\beta_N) ] &=& A (\beta_N - 2) D_{\even,K_2}(\beta_N) \nonumber \\
		&=& A \frac{2p}{(N-1)(1+\theta)} D_{\even,K_2}(\beta_N).\label{eqA2}
	\end{eqnarray}
	Thus, using (\ref{eqA1}) and (\ref{eqA2}), we obtain
	\(
	A = \frac{1}{KN} \frac{N-1}{ D_{\even,K_2}(\beta_N)}, \nonumber
	\)
	that achieves the proof of (\ref{eq:solEven}) for an even value of $K$.
	
	$(b)$ The proof when $K$ is odd is similar. Indeed, for $K = 2 K_2+1$, the linear system for $s_k$, with $0 \leq k \leq K_2$, is
	\begin{eqnarray}
		\beta_N s_0 - 2 s_1 &=& -\frac{1}{K N} \gamma_N , \label{eq:vers0_b} \\
		- s_0 + \beta_N s_1 - s_2 &=& -\frac{1}{K N}, \label{eq:vers1_b} \\
		-s_{k-1} + \beta_N s_{k} - s_{k+1} &=& 0,\label{eq:versk_b}
	\end{eqnarray}
	for $2 \leq k \leq K_2-1$ and
	\begin{equation}
		-   s_{K_2}  + \beta_N s_{K_2} - s_{K_2-1} = 0.  \label{eq:versK2_b}
	\end{equation}
	
	Equation (\ref{eq:versK2_b}) may be written as
	$$
	(\beta_N -1) s_{K_2} = s_{K_2-1},
	$$
	and so
	$$
	s_{K_2} = B = B N_{\odd, 0}(\beta_N), \; s_{K_2-1} = B (\beta_N - 1) = B N_{\odd, 1}(\beta_N).
	$$
	From Equations (\ref{eq:relOdd}) and (\ref{eq:versk_b}), it follows that
	\begin{equation}
		s_k = B N_{\odd,K_2-k}(\beta_N), \; 1 \leq k \leq K_2. \nonumber
	\end{equation}
	
	Then, from Equation (\ref{eq:vers1_b}), we get
	\begin{eqnarray}
		s_0 &=& \beta_N s_1 - s_2 + \frac{1}{K N} \nonumber \\
		&=& B \left[ \beta_N  N_{\odd,K_2-1}(\beta_N) - N_{\odd,K_2-2}(\beta_N) \right] + \frac{1}{NK} \nonumber \\
		&=& B N_{\odd,K_2}(\beta_N) + \frac{1}{K N}. \nonumber \label{eq:s01}
	\end{eqnarray}
	From (\ref{eq:vers0_b}), it follows, using (\ref{eq:relOdd}), that
	\(
	B = \frac{N-1}{K N D_{\odd,K_2}(\beta_N)}. \nonumber
	\)
	The proof of Lemma \ref{lemma:sk_expl} is therefore complete.
\end{proof}

We are now able to prove Theorem \ref{thm:sk_expl}, which provides explicit expressions for the covariances of the proportions of particles in two states under the stationary distribution, in terms of the orthogonal Chebyshev polynomials of first and second kind.

\begin{proof}[Proof of Theorem \ref{thm:sk_expl}]
	Using expressions (\ref{Neven_cheby}), (\ref{Deven_cheby}), (\ref{Nodd_cheby}) and (\ref{Dodd_cheby}), and Lemma \ref{lemma:sk_expl} we obtain explicit expressions for $s_k$ in terms of the Chebyshev polynomials of first and second kind, for $0 \leq k \leq K-1$. Since $\operatorname{Cov}_{\nu_N}[\eta(0)/N, \eta(k)/N] = s_k - 1/K^2$, for all $0 \leq k \leq K-1$, we deduce that (\ref{sys1:s0even}), (\ref{sys1:skeven}), (\ref{sys1:s0odd}) and (\ref{sys1:skodd}) hold.
\end{proof}

Now, using Theorem \ref{thm:sk_expl} we are able to study the monotony of the covariance of the proportions of particles in two sites as a function of the graph distances between these two sites.

\begin{proof}[Proof of Corollary \ref{corollary_geometry}]
	
	Note that $\operatorname{Cov}_{\nu_N}\left[ \frac{\eta(0)}{N}, \frac{\eta(k)}{N} \right] \ge \operatorname{Cov}_{\nu_N}\left[ \frac{\eta(0)}{N}, \frac{\eta(k+1)}{N} \right]$ holds  if and only if $s_k \ge s_{k+1}$, for all $k=0,1,\dots, \lfloor \frac{K}{2} \rfloor$.
	So, for $K$ even, using (\ref{sys1:s0even}) and (\ref{sys1:skeven}), it is sufficient to prove that $T_{k+1}(\beta_N/2) \ge T_{k}(\beta_N/2)$. Let us prove it by induction. 
	We know that $ T_1(\beta_N/2) = \beta_N/2 \ge 1 = T_0(\beta_N/2).$
	Assume that $T_{k}(\beta_N/2) \ge T_{k-1}(\beta_N/2)$. Since $\big(T_n(x)\big)_{n \ge 0}$ satisfies the recurrence relation (\ref{recur_cheby}) we have
	\begin{eqnarray*}
		T_{k+1}(\beta_N/2) - T_k(\beta_N/2) = (\beta_N - 1) T_{k}(\beta_N/2) - T_{k-1}(\beta_N/2) \ge T_{k}(\beta_N/2) - T_{k-1}(\beta_N/2) \ge 0,
	\end{eqnarray*}
	where the first inequality is due to the inequality $\beta_N \ge 2$ and the second one because, by assumption,  $T_{k}(\beta_N/2) \ge T_{k-1}(\beta_N/2)$. Then, $T_{k+1}(\beta_N/2) \ge T_{k}(\beta_N/2)$, for all $k \ge 0$.
	
	Analogously, for $K$ odd the inequality $\operatorname{Cov}_{\nu_N}\left[ \frac{\eta(0)}{N}, \frac{\eta(k)}{N} \right] \ge \operatorname{Cov}_{\nu_N}\left[ \frac{\eta(0)}{N}, \frac{\eta(k+1)}{N} \right]$ holds for all $k = 0,1,\dots, \lfloor \frac{K}{2} \rfloor$ if
	\begin{equation}\label{condit_Uk}
		U_{k+1}(\beta_N/2) - U_k(\beta_N/2) \ge U_k(\beta_N/2) - U_{k-1}(\beta_N/2),
	\end{equation}
	for all $k \ge 1$. For $k = 1$ we have that (\ref{condit_Uk}) is equivalent to \( \beta_N^2 - 2 \beta_N \ge 0 \),
	which is trivially true since $\beta_N \ge 2$.
	Assume that (\ref{condit_Uk}) holds and let us prove the inequality for $k+1$.
	Indeed, using that $\big(U_n\big)_{n \ge 0}$ satisfies the recurrence relation (\ref{recur_cheby}), we have
	\[
	U_{k+2}(\beta_N/2) - U_{k+1}(\beta_N/2) = (\beta_N - 1) U_{k+1}(\beta_N/2) - U_{k}(\beta_N/2) \ge U_{k+1}(\beta_N/2) - U_{k}(\beta_N/2).
	\]
	Thus, (\ref{condit_Uk}) holds for all $k = 0,1, \dots, K_2$.
\end{proof}

\subsection{Proof of Theorem \ref{thm_two_particles_covariance}}\label{subsec_thm_assymptotic}

Theorem \ref{thm:sk_expl} allows us to get a Taylor series expansion  for
$s_k$, $0 \leq k \leq K-1$ as a function of $\frac{1}{N}$, as soon as we are able to obtain such a series expansion for $\beta_N$, as a function of $1/N$, as well as for the polynomials
$N_{\odd, n}(x)$, $N_{\even,n}(x)$,
$D_{\odd, n}(x)$, $D_{\mbox{{\tiny even}}, n}(x)$, $n\geq 0$
around $x = 2$, using their definitions by induction given in (\ref{recurrent_relation}).

\begin{lem}\label{lema:taylor_expressions}
	The polynomials $N_{\odd, n}(x)$, $N_{\even,n}(x)$, $D_{\odd, n}(x)$, $D_{\mbox{{\tiny even}}, n}(x)$, for $n\geq 0$, satisfy the following Taylor series expansion of order $2$ around $x = 2$:
	\begin{eqnarray}
		N_{\even,n}(x) &=& 2 + n^2 (x-2) + \frac{n^4 - n^2}{12} (x - 2)^2 + o(x-2)^2, \label{eq:NevenDL} \\
		D_{\even,n}(x) &=& 4n + \frac{2n^3 + n}{3} (x - 2)+ \frac{n^5 - n}{30} (x-2)^2 + o(x-2)^2, \label{eq:DevenDL}\\
		N_{\odd,n}(x) &=& 1 + \frac{n^2 + n}{2}(x - 2) + \frac{n^4 + 2 n^3 - n^2 - 2n}{24} (x - 2)^2 + o(x-2)^2, \label{eq:NoddDL} \\
		D_{\odd,n}(x) &=& 2n+1 + \frac{2n^3 + 3n^2 + n}{6}(x - 2) +  \frac{2n^5 + 5n^4 - 5n^2 - 2n}{120}(x - 2)^2 + \nonumber\\
		& & + \; o(x -2)^2.\label{eq:DoddDL}
	\end{eqnarray}
\end{lem}

\begin{proof}
	Assume $N_{\even,n}(x) = a_0^{(n)} + a_1^{(n)}(x - 2) + a_2^{(n)} (x-2)^2 + o(x-2)^2$, for all $n \ge 0$. Note that the polynomials $N_{\even,n}(x)$ can also be defined as
	\begin{eqnarray}
		N_{\even, 0}(x) &=& 2, \nonumber \\
		N_{\even, 1}(x) &=& (x-2) + 2, \nonumber \\
		N_{\even, n}(x) &=& (x - 2) N_{\even, n-1}(x) + 2 N_{\even, n-1}(x) - N_{\even, n-2}(x), \; n \geq 2.\label{recurrence_adapted}
	\end{eqnarray}
	Thus, the coefficients $\big(a_0^{(n)}\big)_{n \ge 0}$ satisfy the recurrence relation $a_0^{(0)} = a_0^{(1)} = 2$ and $a_0^{(n)} = 2 a_0^{(n-1)} -  a_0^{(n-2)}$, for every $n \ge 2$, which yields $a_0^{(n)} = 2$, for all $n \ge 0$.
	
	Also, using (\ref{recurrence_adapted}), the coefficients $\big(a_1^{(n)}\big)_{n \ge 0}$ satisfy $a_1^{(0)} = 0$, $a_1^{(1)} = 1$ and
	\[
	a_1^{(n)} = 2 a_1^{(n-1)} -  a_1^{(n-2)} + a_0^{(n-1)} = 2 a_1^{(n-1)} -  a_1^{(n-2)} + 2,
	\]
	for all $n \ge 0$.
	Solving this recurrence gives $a_1^{(n)} = n^2$, for all $n \ge 2$.
	
	Similarly, the coefficients $\big(a_2^{(n)}\big)_{n \ge 0}$ satisfy $a_2^{(0)} = a_2^{(1)} = 0$ and
	\[
	a_2^{(n)} = 2 a_2^{(n-1)} -  a_2^{(n-2)} + a_1^{(n-1)} = 2 a_2^{(n-1)} -  a_2^{(n-2)} + (n-1)^2,
	\]
	for all $n \ge 0$.
	which yields $\displaystyle a_2^{(n)} = \frac{n^4 - n^2}{12}$, for all $n \ge 2$, proving (\ref{eq:NevenDL}).
	
	The proofs of (\ref{eq:DevenDL}), (\ref{eq:NoddDL}) and (\ref{eq:DoddDL}) are similar.
\end{proof}

We now prove Theorem \ref{thm_two_particles_covariance}, which provides a second order Taylor series expansion of the variance of the proportion of particles in each state, as a function of $1/N$,  when $N$ tends to infinity.

\begin{proof}[Proof of Theorem \ref{thm_two_particles_covariance}]
	
	Suppose $K$ is even, say $K = 2 K_2$. Using Lemma \ref{lemma:sk_expl}, we have
	\[
	s_k  = \frac{1}{K} \left( 1 - \frac{1}{N} \right) \frac{N_{\even, K_2 - k}(\beta_N)}{D_{\even, K_2}(\beta_N)},
	\]
	for all $k = 1,2, \dots, K_2$. Note that $\beta_N$, defined by (\ref{eq:params}), tends to $2$ when $N$ tends to infinity, specifically
	\[
	\beta_N - 2 = \frac{2 p}{(N-1)(1+\theta)} = \frac{2p}{1+\theta} \left( \frac{1}{N} + \frac{1}{N^2}\right) + o\left( \frac{1}{N^2} \right).
	\]
	Using (\ref{eq:NevenDL}) and (\ref{eq:DevenDL}), we have
	\begin{eqnarray}
		\frac{N_{\even, K_2 - k}(\beta_N)}{D_{\even, K_2}(\beta_N)} &=& \frac{2 + (K_2 - k)^2 (\beta_N-2) + \frac{(K_2 - k)^4 - (K_2 - k)^2}{12} (\beta_N - 2)^2 + o\left((\beta_N-2)^2\right)}{4K_2 + \frac{2K_2^3 + K_2}{3} (\beta_N - 2)+ \frac{K_2^5 - K_2}{30} (\beta_N-2)^2 + o\left((\beta_N-2)^2\right)} \nonumber\\
		&=& \frac{1}{K} + \frac{\left(6 k(k-K)+K^2-1\right)}{12 K} (\beta_N - 2) \nonumber \\
		& & + \; \frac{30 k (K - k)[k(K-k) + 2] - (K^2 - 1) (K^2 + 11)}{720 K}(\beta_N - 2)^2 \nonumber \\
		& & + \;  o \left((\beta_N - 2)^2\right), \label{DLfractionEven}
	\end{eqnarray}
	where $K = 2 K_2$.

	Finally,
	\begin{eqnarray*}
		s_k &=& \frac{1}{K^2} + \left( - 1 + \frac{(6 k (k-K)  +K^2-1)}{6 } \frac{p}{1 + \theta}\right) \frac{1}{K^2 N} \\
		& & + \;  \frac{  30 k (K - k)[k(K-k) + 2] -(K^2 - 1) \left(K^2+11\right)}{180}  \left( \frac{p}{1 + \theta} \right)^2  \frac{1}{K^2 N^2} + o \left( \frac{1}{N^2} \right).
	\end{eqnarray*}
	Using (\ref{eq:solEven0}), we get the following expression for $s_0$,
	\[
		s_0  = \frac{1}{K^2} + \left(K - 1 +\frac{K^2-1}{6 } \frac{p}{1 + \theta}\right) \frac{1}{K^2 N}  +   \frac{ \left(K^2-1 \right) \left(K^2+11\right)}{180} \left( \frac{p}{1 + \theta} \right)^2  \frac{1}{K^2 N^2} + o \left( \frac{1}{N} \right).
	\]
	Now, the expression (\ref{DLcovariancesK}) for $\operatorname{Cov}_{\nu_N}\left[ {\eta(0)}/{N}, {\eta(k)}/{N} \right]$ with $K$ even follows by noting that $\mathbb{E}_{\nu_N}\left[ \frac{\eta(k)}{N} \right] = \frac{1}{K}$, for all $k = 0,1,2, \dots, K-1$.
	
	Considering $K$ odd, specifically $K = 2 K_2 + 1$, and using (\ref{eq:NoddDL}) and (\ref{eq:DoddDL}), we have
	\begin{eqnarray*}
		\frac{N_{\odd, K_2 - k}(\beta_N)}{D_{\odd, K_2}(\beta_N)}
		&=& \frac{1}{K}+\frac{\left(6 k( k - K) + K^2-1 \right)}{12 K}(\beta_N - 2) \\
		& & + \; \frac{30 k (K - k)[k(K-k) + 2] - (K^2 - 1) (K^2 + 11)}{720 K}(\beta_N - 2)^2\\
		& & + \; o\left((\beta_N -2)^2\right),
	\end{eqnarray*}
	which is the same expression we get for $\displaystyle \frac{N_{\even, K_2 - k}(\beta_N)}{D_{\even, K_2}(\beta_N)}$ in (\ref{DLfractionEven}). So, the general result is proved.
\end{proof}

\begin{proof}[Proof of Corollary \ref{corol:Convergence_QSD}]
	Using Jensen's inequality, we have
	\begin{eqnarray}
		\mathbb{ E }_{ \nu_N }\left[ \left\| m ( \eta ) - \nu_{\mathrm{qs}} \right\|_{ 2 } \right] &\leq& \left(\mathbb{E}_{\nu_N} \left\| m(\eta) - \nu_{\mathrm{qs}} \right\|_{2}^{2} \right)^{1/2} \nonumber \\
		&=& \left(\sum_{k=0}^{ K-1 } \operatorname{Var}_{\nu_N} \left[ \frac{\eta(k)}{N} \right] \right)^{1/2} \nonumber \\
		&=& \sqrt{ K } \left( \operatorname{Var}_{\nu_{ N }} \left[ \frac{\eta(0)}{N} \right] \right)^{1/2}. \label{ineq:bounding_norm2}
	\end{eqnarray}
	Finally, (\ref{ineq:bounding_norm2_corol}) is proved using (\ref{ineq:bounding_norm2}) and Theorem \ref{thm_two_particles_covariance}.
\end{proof}

\section{Covariances of the proportions of particles at a given time }\label{sec:ergo}

\subsection{Proof of Theorem \ref{ergodicity_thm}}

\begin{proof}[Proof of Theorem \ref{ergodicity_thm}]
	
	Consider $\eta \in \mathcal{E}_{K,N}$ and the function $f_k: \eta \mapsto \eta(k)$, for $k \in \{0,1,\dots, K-1\}$. Using the expression of $\mathcal{L}_{K,N} f_k$, for $k=0,1, \dots, K-1$, given by (\ref{exprLBk}), and the Kolmogorov equation, we get
	\begin{eqnarray}
		\frac{\operatorname{d}}{\operatorname{d} t} \mathbb{E}_{\eta} \left[ \displaystyle \frac{f_k\big(\eta_t^{(N)}\big)}{N} \right] &=& \mathbb{E}_{\eta} \left[ \frac{\mathcal{L}_{K,N} f_k\big(\eta_t^{(N)}\big)}{N} \right] \nonumber \\
		&=& \mathbb{E}_{\eta} \left[ \displaystyle \frac{f_{k-1}\big(\eta_t^{(N)}\big)}{N} \right]  -  (1+\theta) \mathbb{E}_{\eta} \left[ \displaystyle \frac{f_k\big(\eta_t^{(N)}\big)}{N} \right] \nonumber \\
		& & +  \theta \, \mathbb{E}_{\eta} \left[ \displaystyle \frac{f_{k+1}\big(\eta_t^{(N)}\big)}{N} \right],\label{edo_expected}
	\end{eqnarray}
	for $k = 0,1,\dots, K-1$.
	
	Let us define $s_t(k) = \mathbb{E}_{\eta} \left[f_k\big(\eta_t^{(N)}\big)/N \right] =  \mathbb{E}_{\eta} \left[\eta_t^{(N)}(k)/N \right] = \overline{m}\big(\eta^{(N)}_t\big)(k)$, for $k=0,1,\dots, K-1$, and the vector $\mathbf{s}_t = (s_t(0), s_t(1), \dots, s_t(K-1))^T$. Using (\ref{edo_expected}), we get that $\mathbf{s}_t$ satisfies the differential equation
	\[
	\frac{ \operatorname{d} \mathbf{s}_t}{\operatorname{d} t} = \mathbf{s}_t Q,
	\]
	where $Q$ is the circulant infinitesimal rate matrix defined in (\ref{Qmatrix_def}),
	with initial condition $\mathbf{s}_0 = \eta/N$.
	Note that the solution of this differential equation is given by
	\[
	\mathbf{s}_t = \frac{\eta}{N} \mathrm{e}^{ t Q }.
	\]
	Thus, $\overline{m}\big(\eta^{(N)}_t\big)$ is actually equal to the distribution of the asymmetric random walk on the cycle graph $\mathbb{Z}/K \mathbb{Z}$ with infinitesimal generator matrix $Q$ and initial distribution $m(\eta)$ at time $t = 0$, which is $\mathcal{L}_{m(\eta)}\left(Z_t \mid t < \tau_p\right)$. 
	So, the proof of formula (\ref{bound_ergo_L2}) follows from (\ref{exp_bound_norm_2distrib}) in Theorem \ref{thm1}.
	
\end{proof}

\subsection{Proof of Theorem \ref{ergodicity_thm_2}}

In order to study the convergence of the empirical distribution $m\big(\eta_t^{(N)}\big)$ induced by the $N$-particle system, we will analyze the behavior of the covariance functions in time. Let $\eta \in \mathcal{E}_{K,N}$ be fixed and let us define the functions $s_t^{(2)}(k,r)$ as $s_t^{(2)}(k,r) = \mathbb{E}_{\eta} \left[ {f(k,r)}/{N^2} \right] = \mathbb{E}_{\eta} \left[ \eta(k)\eta(r)/{N^2} \right]$, for all $k,r \in \mathbb{Z}/K \mathbb{Z}$. Using (\ref{exprLBkk}), (\ref{exprLBkl}) and (\ref{exprLBkl1}), we have
\begin{eqnarray*}
	\frac{\operatorname{d} s_{t}^{(2)}(k,k)}{\operatorname{d} t} &=& 2 \left[s_t^{(2)}(k, k-1) - \left( 1 + \theta + \frac{p}{N-1} \right) s_t^{(2)}(k,k) + \theta  s_t^{(2)}(k, k+1)  \right]\\
	& & + \frac{1}{N} \left[ s_t(k-1) + \left(1 + \theta + 2 \frac{p}{N-1} \right) s_t(k) + \theta s_t(k+1) \right],\\	
	\frac{\operatorname{d} s_{t}^{(2)}(k,k+1)}{\operatorname{d} t} &=& -2 \left( 1 + \theta + \frac{p}{N-1} \right) s_{t}^{(2)}(k,k+1) + s_{t}^{(2)}(k-1,k+1) + \theta s_{t}^{(2)}(k+1,k+1)\\
	& & + s_{t}^{(2)}(k,k) + \theta s_{t}^{(2)}(k,k+2) - \frac{1}{N} \left[ s_{t}(k) + \theta  s_{t}(k+1) \right] \\
	\frac{\operatorname{d} s_{t}^{(2)}(k,k+l)}{\operatorname{d} t} &=& -2 \left( 1 + \theta + \frac{p}{N-1} \right) s_{t}^{(2)}(k,k+l) + s_{t}^{(2)}(k-1,k+l) + \theta s_{t}^{(2)}(k+1,k+l)\\
	& & + s_{t}^{(2)}(k,k+l-1) + \theta s_{t}^{(2)}(k,k+l+1).\\
\end{eqnarray*}

Consider the functions $g_t(k,r)$ defined as
\begin{equation*}
	g_t(k,r) = \operatorname{Cov}_{\eta}\left[ \frac{\eta_t(k)}{N},\, \frac{\eta_t(r)}{N} \right] = s_t^{(2)}(k,r) - s_t(k) s_t(r),
\end{equation*}
for all $k,r \in \mathbb{Z}/K \mathbb{Z}$.

Then, we obtain the following system of differential equations
\begin{eqnarray*}
	\frac{\operatorname{d} g_t(k,k)}{\operatorname{d} t} 
	&=& 2 \left[g_t(k, k-1) - \left( 1 + \theta + \frac{p}{N-1} \right) g_t(k,k) + \theta  g_t(k, k+1)  \right] \nonumber \\
	& & + \frac{1}{N} \left[ s_t(k-1) + \left(1 + \theta + 2 \frac{p}{N-1} \right) s_t(k) + \theta s_t(k+1) \right] - \frac{2 p}{N-1} s_t(k)^2, \label{EDO_cov1}\\
	\frac{\operatorname{d} g_t(k,k+1)}{\operatorname{d} t}
	&=& -2 \left( 1 + \theta + \frac{p}{N-1} \right) g_t(k,k+1) + g_t(k-1,k+1) + \theta g_t(k+1,k+1) \nonumber \\
	& & + g_t(k,k) + \theta g_t(k,k+2) - \frac{1}{N}[s_{t}(k) + \theta  s_{t}(k+1)] - \frac{2p}{N-1} s_t(k) s_t(k+1),\label{EDO_cov2}\\
	\frac{\operatorname{d} g_t(k,l)}{\operatorname{d} t} 
	&=&-2 \left( 1 + \theta + \frac{p}{N-1} \right) g_t(k,l) + g_t(k-1,l) + \theta g_t(k+1,l) \nonumber \\
	& & + g_t(k,l-1) + \theta g_t(k,l+1)  - \frac{2p}{N-1} s_t(k) s_t(l). \nonumber
\end{eqnarray*}

Then, the $K^2$-dimensional vector $\mathbf{g}_t = \big({g}_t(k,r)\big)_{k,r}$ satisfies the differential equation
\begin{eqnarray}\label{EDO_covariances}
	\frac{\operatorname{d} \mathbf{g}_{t}}{\operatorname{d} t} &=& \mathbf{g}_t Q^{(2)}_p + \mathbf{w}_t, \label{EDOQ2dim_for_g_t}
\end{eqnarray}
where $Q^{(2)}_p = Q^{(2)} - 2 \frac{p}{N-1} I$, $I$ is the $K^2$-dimensional identity matrix, the matrix $Q^{(2)} \in M_{\mathbb{{R}}}(K^2)$ is defined as
\begin{equation}\label{definition_Q2}
	Q^{(2)}_{(u,v),(k,r)} =
	\left\{
	\begin{array}{ccl}
		1 & \text{ if } & (k = u+1 \land r = v) \lor (k=u \land r = v+1),\\
		\theta & \text{ if } & (k = u-1 \land r = v) \lor (k=u \land r = v-1),\\
		-2(1 + \theta) & \text{ if } & (k = u) \land (r = v).
	\end{array}
	\right.
\end{equation}
and $\mathbf{w}_t = (w_t(k,r))_{k,r}$ is the $K^2$-vector defined by
\[
w_t(k,r) =
\left\{
\begin{array}{lll}
	\displaystyle \frac{1}{N} \big[ s_t(k-1) + \left(1 + \theta + 2 \frac{p}{N-1} \right) s_t(k) + \theta s_t(k+1) \big]
	- \frac{2 p}{N-1} s_t(k)^2 & \text{ if } & r=k \\
	\displaystyle - \frac{1}{N} \left[s_{t}(k \wedge r) + \theta  s_{t}(k \vee r)\right] - \frac{2p}{N-1} s_t(k) s_t(r) & \text{ if } & |k-r| = 1\\
	\displaystyle - \frac{2p}{N-1} s_t(k) s_t(r) & \text{ if } & |k - r| > 1,
\end{array}
\right.
\]
for all $k,r \in \mathbb{Z}/K \mathbb{Z}$.

Note also that
\begin{eqnarray*}
	{g}_0(k,r) &=& 0, \\
	g_{\infty}(k,r) &=& \lim\limits_{t \rightarrow \infty} g_t(k,r) = \operatorname{Cov}_{\nu_N} \left[ \frac{\eta(k)}{N}, \frac{\eta(r)}{N} \right], 
\end{eqnarray*}
and
\begin{equation*}
	w_{\infty}(k,r) = \lim\limits_{t \rightarrow \infty} w_t(k,r) =
	\left\{
	\begin{array}{ccl}
		\frac{2}{K N} \left(1 + \theta + \frac{p}{N-1}\right) - \frac{2p}{K^2 (N-1)} & \text{ if } & k=r,\\
		- \frac{1}{K N} (1 + \theta)  - \frac{2 p}{K^2 (N-1)} & \text{ if } & |k-r| = 1,\\
		- \frac{2 p}{ (N-1)} \frac{1}{K^2} & \text{ if } & |k-r| > 1,\\
	\end{array}
	\right.
\end{equation*}
for all $k,r \in \mathbb{Z}/K \mathbb{Z}$.

Let $A = (a_{r,c})$ and $B = (b_{r,c})$ be two matrices of dimensions $m \times n$ and $w \times q$, respectively. Recall that the Kronecker product of $A$ and $B$, denoted by $A \otimes B$, is the $m w \times n q$ matrix defined as
\[
A \otimes B =
\left(
\begin{array}{cccc}
	a_{0,0} B & a_{0,1} B & \ldots & a_{0, n-1} B \\
	\vdots & \vdots & \ddots  & \vdots \\
	a_{m-1, 0} B & a_{m-1, 1} B & \ldots & a_{m-1, n-1} B
\end{array}
\right).
\]
It is convenient to index the elements of $A \otimes B$ with two $2$-dimensional index in the following way
$$(A \otimes B)_{(r_1,r_2),(c_1,c_2)} = (A \otimes B)_{r_1 m + r_2, c_1 n + c_2} = a_{r_1, c_1} \, b_{r_2, c_2},$$
for all $0 \le r_1 \le m-1, 0 \le r_2 \le w-1, 0 \le c_1 \le n - 1, 0 \le c_2 \le q - 1$. Now, consider that $m = n$ and $w = q$, i.e.\ $A$ and $B$ are square matrices of dimension $n$ and $q$, respectively. The Kronecker sum of $A$ and $B$, denoted by $A \oplus B$, is defined as $A \oplus B = A \otimes I_q + I_n \otimes B$, where $I_q$ and $I_n$ are the identity matrices of dimension $q$ and $n$, respectively. It is well known that the exponential of matrices transforms Kronecker sums in Kronecker products as follows
\begin{equation}\label{eq:Kronecker_sum_prod}
	\mathrm{e}^{A \oplus B} = \mathrm{e}^A \otimes \mathrm{e}^B.
\end{equation}

See e.g.\ Chapter XIV of \cite{1965KroneckerSum} and \cite{davis_circulant} for the proofs of these results and more details about the Kronecker product and sum of matrices.

\begin{lem}\label{lemma:propertiesQ2}
	The following properties hold:
	\begin{enumerate}
		\item $Q^{(2)} = Q \oplus Q$, \label{eq:oplus}
		\item $\mathrm{e}^{t Q^{(2)}} = \mathrm{e}^{t Q} \otimes \mathrm{e}^{t Q}$. \label{eq:expKronecker}
	\end{enumerate}
	Consequently, the matrix $Q^{(2)}$ is the infinitesimal rate matrix of the independent coupling of two processes driven by the infinitesimal generator matrix $Q$.
\end{lem}

\begin{proof}[Proof of Lemma \ref{lemma:propertiesQ2}]
	Note that using (\ref{definition_Q2}) for all $r_1, r_2, c_1, c_2 \in \{0,1,\dots,K-1\}$, we have
	\[ Q^{(2)}_{(r_1,r_2),(c_1,c_2)} = Q_{r_1,c_1} I_{r_2,c_2} + I_{r_1,c_1} Q_{r_2,c_2} = (Q \oplus Q)_{(r_1,r_2),(c_1,c_2)},\]
	where $I$ is the $K$-dimensional identity matrix. Then, property \ref{eq:oplus} holds.
	Also, using (\ref{eq:Kronecker_sum_prod}) we can easily prove the property \ref{eq:expKronecker}.
	
	All the non-diagonal entries of matrix $Q^{(2)}$ are positive and the sum of each row is null, thus $Q^{(2)}$ is an infinitesimal matrix. Furthermore,
	\[
	\mathrm{e}^{t Q^{(2)}}_{(r_1,r_2),(c_1,c_2)} = \mathrm{e}^{t Q}_{r_1,c_1} \mathrm{e}^{t Q}_{r_2,c_2},
	\]
	which means that $Q^{(2)}$ is the infinitesimal rate matrix of the independent coupling of two processes driven by $Q$.
\end{proof}

Note also that, when $t$ goes to infinity in (\ref{EDOQ2dim_for_g_t}), we get $\mathbf{g}_{\infty} Q_p^{(2)} + \mathbf{w}_{\infty} = 0$. Since $Q^{(2)}$ is the infinitesimal matrix generator of a Markov process and $Q^{(2)}_p = Q^{(2)} - p_N I$, where $p_N = \frac{2 p}{N-1}$, all the eigenvalues of $Q^{(2)}_p$ are strictly negative and thus, $Q^{(2)}_p$ is invertible. Then,
\begin{equation}\label{charact_Minfty}
	\mathbf{g}_{\infty} = - \mathbf{w}_{\infty} \left( Q_p^{(2)} \right)^{-1}.
\end{equation}

We will now prove Theorem \ref{ergodicity_thm_2}, which gives us the solution of the system of differential equations (\ref{EDO_covariances}) and studies the convergence of the proportion of particles at time $t$ in each state when $t$ and $N$ tend to infinity.

\begin{proof}[Proof of Theorem \ref{ergodicity_thm_2}]
	The solutions of the system of differential equations (\ref{EDO_covariances}) is given by
	\begin{eqnarray*}
		\mathbf{g}_t &=& \left( \int_0^t  \mathbf{w}_u \mathrm{e}^{-u Q^{(2)}_p} \mathrm{d} u \right) \mathrm{e}^{t Q^{(2)}_p}\\
		&=& \left( \int_0^t  \big(\mathbf{w}_u - \mathbf{w}_{\infty}\big) \mathrm{e}^{-u Q^{(2)}_p} \mathrm{d} u + \mathbf{w}_{\infty} \int_0^t  \mathrm{e}^{-u Q^{(2)}_p} \mathrm{d} u \right) \mathrm{e}^{t Q^{(2)}_p}\\
		&=& \left( \int_0^t  \big(\mathbf{w}_u - \mathbf{w}_{\infty}\big) \mathrm{e}^{-u Q^{(2)}_p} \mathrm{d} u + \mathbf{w}_{\infty} \left(Q^{(2)}_p\right)^{-1} \left( I - \mathrm{e}^{-t Q^{(2)}_p} \right)  \right) \mathrm{e}^{t Q^{(2)}_p}\\
		&=& \int_0^t  \big(\mathbf{w}_u - \mathbf{w}_{\infty}\big) \mathrm{e}^{(t-u) Q^{(2)}_p} \mathrm{d} u +  \mathbf{g}_{\infty} \left(I - \mathrm{e}^{t Q^{(2)}_p}\right).
	\end{eqnarray*}
	Note that the last equality comes from (\ref{charact_Minfty}). Therefore, we have
	\begin{eqnarray}
		\left\|\mathbf{g}_t - \mathbf{g}_{\infty}  \right\|_{\infty} & \le & \left\| \int_0^t  \big(\mathbf{w}_u - \mathbf{w}_{\infty} \big) \mathrm{e}^{(t-u) Q^{(2)}_p} \mathrm{d} u \right\|_{\infty} + \left\|\mathbf{g}_{\infty}  \left(\mathrm{e}^{t Q^{(2)}_p}\right) \right\|_{\infty} \nonumber \\
		& \le & \int_0^t  \left\| \mathbf{w}_u - \mathbf{w}_{\infty} \right\|_{\infty} \left\| \mathrm{e}^{(t - u) Q^{(2)}_p}  \right\|_{\infty} \mathrm{d} u + \left\| \mathbf{g}_{\infty} \right\|_{\infty} \left\| \mathrm{e}^{t Q^{(2)}_p} \right\|_{\infty} \label{bound_cova}
	\end{eqnarray}
	We get
	\begin{eqnarray}
		\left\| \mathrm{e}^{s Q^{(2)}_p}  \right\|_{\infty} &=& \mathrm{e}^{- p_N s} \left\| \mathrm{e}^{s Q^{(2)}}  \right\|_{\infty} = \mathrm{e}^{- p_N s}, \label{eq:bound_Var_1}
	\end{eqnarray}
	for all $s \ge 0$, where $p_N = \frac{2 p}{N-1}$.
	Note that the second equality in (\ref{eq:bound_Var_1}) comes from the fact that the rows of $\mathrm{e}^{s Q^{(2)}}$ has sum equal to one, for all $s \ge 0$. 
	Using Corollary \ref{corollary_geometry}, or the Cauchy\,--\,Schwarz inequality, we get
	\begin{equation}\label{eq:bound_Var_3}
		\left\| \mathbf{g}_{\infty} \right\|_{\infty} = \operatorname{Var}_{\nu_N} \left[ \frac{\eta(0)}{N} \right].
	\end{equation}
	Using the inequality (\ref{exp_bound_2norm}) we get
	\[
	\left| s_t(k) - \frac{1}{K} \right| \le \| \mathcal{L}_{m(\eta)}(Z_t \mid t < \tau_p) - \nu_{\mathrm{qs}} \|_2 \le \sqrt{\frac{K - 1}{K}} \mathrm{e}^{- \rho_K t},
	\]
	for every $k \in \mathbb{Z}/K \mathbb{Z}$ and all $t \ge 0$. Therefore,
	\[
		|\mathbf{w}_u(k,k) - \mathbf{w}_{\infty}(k,k)| \le \frac{2}{N}\left(1 + \theta + \frac{p}{N-1}\right) \mathrm{e}^{-\rho_K u} + \frac{2 p}{N-1} \left|s_u(k)^2 - \frac{1}{K^2} \right|. 
	\]
	But
	\[
	\left|s_u(k)^2 - \frac{1}{K^2} \right| = \left(s_u(k) + \frac{1}{K}\right) \left|s_u(k) - \frac{1}{K} \right| \le \frac{K+1}{K}\sqrt{\frac{K-1}{K}} \mathrm{e}^{-\rho_K u}.
	\]
	Thus,
	\begin{eqnarray}
		|\mathbf{w}_u(k,k) - \mathbf{w}_{\infty}(k,k)| &\le& \frac{2}{N}\left(1 + \theta + \frac{p}{N-1} +
		\frac{p}{N-1} \frac{ N(K+1)\sqrt{K-1}}{K\sqrt{K}} \right) \mathrm{e}^{-\rho_K u}. \label{bound_residual_kk}
	\end{eqnarray}
	Similarly we get,
	\begin{eqnarray}
		|\mathbf{w}_u(k,k+1) - \mathbf{w}_{\infty}(k,k+1)| &\le& \frac{2}{N}\left(1 + \theta + \frac{p}{N - 1} \frac{ N(K+1)\sqrt{K-1}}{K\sqrt{K}} \right) \mathrm{e}^{-\rho_K u}, \label{bound_residual_kk1}\\
		|\mathbf{w}_u(k,l) - \mathbf{w}_{\infty}(k,l)| &\le& \frac{2p}{N - 1} \frac{ (K+1)\sqrt{K-1}}{K\sqrt{K}} \mathrm{e}^{-\rho_K u}, \;\; |k - l| \ge 2. \label{bound_residual_kl}
	\end{eqnarray}
	Inequalities (\ref{bound_residual_kk}), (\ref{bound_residual_kk1}) and (\ref{bound_residual_kl}) imply that
	\begin{equation} \label{bound_residual}
		\left\| \mathbf{w}_u - \mathbf{w}_{\infty} \right\|_{\infty} \le C_{K,N} \,\mathrm{e}^{-\rho_K u},
	\end{equation}
	where $C_{K,N}$ is defined by (\ref{def:CKN}).
	Plugging (\ref{eq:bound_Var_1}), (\ref{eq:bound_Var_3}) and (\ref{bound_residual}) into (\ref{bound_cova}), we obtain
	\begin{eqnarray}
		\left\|\mathbf{g}_t - \mathbf{g}_{\infty}\right\|_{\infty} & \le & C_{K,N} \int_0^t  \mathrm{e}^{- \rho_K u} \mathrm{e}^{- p_N (t - u)} \mathrm{d} u +  \mathrm{e}^{- p_N t} \|\mathbf{g}_{\infty}\|_{\infty}   \nonumber \\
		&=& C_{K,N} \mathrm{e}^{- p_N t}\int_0^t  \mathrm{e}^{- (\rho_K - p_N) u} \mathrm{d} u +  \mathrm{e}^{- p_N t} \operatorname{Var}_{\nu_N} \left[ \frac{\eta(0)}{N} \right] \nonumber \\
		&=& C_{K,N} \frac{\mathrm{e}^{-p_N t} - \mathrm{e}^{- \rho_K t}}{\rho_K - p_N} + \mathrm{e}^{- p_N t} \operatorname{Var}_{\nu_N} \left[ \frac{\eta(0)}{N} \right] \label{ineq:bounding_var} \\
		&=& C_{K,N} \frac{1 - \mathrm{e}^{- \rho_K t}}{\rho_K} + \operatorname{Var}_{\nu_N} \left[ \frac{\eta(0)}{N} \right] + o\left( \frac{1}{N} \right) \nonumber \\
		&=& \frac{1}{N} \left\{ D_K \frac{1 - \mathrm{e}^{- \rho_K t}}{\rho_K} + E_K  \right\}  + o\left( \frac{1}{N} \right), \nonumber
	\end{eqnarray}
	where $D_K$ and $E_K$ are given by (\ref{def:DEK}). Note that (\ref{thm_var_1}) is obtained from (\ref{ineq:bounding_var}).
	
	In order to prove (\ref{thm_var_2}), note that for every initial distribution $\mu$ in $\mathbb{Z}/K \mathbb{Z}$ and any initial configuration $\eta \in \mathcal{E}_{K,N}$, we get
	\begin{eqnarray}
		\left\|\overline{m}\left(\eta_{t}\right) - \mathcal{L}_{\mu} (Z_t \mid t \le \tau_p) \right\|_2 &\le & 
		\mathbb{E}_{\eta} \big[\left\|m\left(\eta_{t}\right) - \mathcal{L}_{\mu} (Z_t \mid t \le \tau_p) \right\|_2\big] \label{ineq:triangular2_left} \\ 
		&\le& \mathbb{E}_{\eta} \big[\left\|m\left(\eta_{t}\right) - \overline{m}\left(\eta_{t}\right) \right\|_2\big]
		+ \left\|\overline{m}\left(\eta_{t}\right) - \mathcal{L}_{\mu} (Z_t \mid t \le \tau_p) \right\|_2. \label{ineq:triangular2_right}
	\end{eqnarray}
	Inequality (\ref{ineq:triangular2_left}) is obtained using the convexity of the $2$-norm and Jensen's inequality. Inequality (\ref{ineq:triangular2_right}) is proved using the triangular inequality.
	From Theorem \ref{ergodicity_thm} we know that for any initial configuration $\eta \in \mathcal{E}_{K,N}$, we obtain
	\begin{equation}\label{ineq:triangular_empirical2}
		\mathrm{e}^{- \rho_K t} \left| \varphi_{m(\eta)}\left( \frac{2 \pi}{K} \right) - \varphi_{\mu}\left( \frac{2 \pi}{K} \right) \right| \le \left\|\overline{m}\left(\eta_{t}\right) - \mathcal{L}_{\mu} (Z_t \mid t \le \tau_p) \right\|_2  \le \mathrm{e}^{- \rho_K t} \left\| m(\eta) - \mu \right\|_2,
	\end{equation}
	where $\rho_K$ is given by (\ref{def:rhoK}).
	Also,
	\begin{eqnarray}
		\mathbb{E}_{\eta} \big[\left\|m\left(\eta_{t}\right) - \overline{m}\left(\eta_{t}\right) \right\|_2^2\big] &=& \sum_{k = 0}^{K-1} \operatorname{Var}_{\eta} \left[ \frac{\eta_t(k)}{N} \right] \le  K \left\| g_t \right\|_{\infty} \nonumber \\
		&\le& \frac{2 K}{N} \left( D_K\frac{1 - \mathrm{e}^{- \rho_K t}}{\rho_K}  + E_K \right) + o\left( \frac{1}{N} \right), \label{ineq:norm2squared}
	\end{eqnarray}
	where $D_K$ and $E_K$ are defined by (\ref{def:DEK}). Finally, (\ref{thm_var_2}) is proved using (\ref{ineq:triangular2_left}), (\ref{ineq:triangular2_right}), (\ref{ineq:triangular_empirical2}), (\ref{ineq:norm2squared}) and Jensen's  inequality.
\end{proof}

\appendix

\section{Proof of Lemma \ref{teo_dynam}}\label{app:generator}

In order to calculate $\mathcal{L}_{K,N} f_{k}$, note that
\begin{eqnarray*}
	(\mathcal{L}_{K,N} f_{k})(\eta) &=& \sum\limits_{i, j} \eta(i)  \left( \mathbbm{1}_{\{ j = i + 1\}} + \theta \mathbbm{1}_{\{ j = i - 1\}}  + \eta(j) \frac{p}{N-1} \right)\left[ f_{k}\big(T_{i \rightarrow j} \eta\big) - f_{k}\big(\eta\big) \right].
\end{eqnarray*}
But $f_{k}\big(T_{i \rightarrow j} \eta\big) = f_{k}\big(\eta\big)$ if $i \neq k$ and $j \neq k$. Thus,
\begin{eqnarray*}
	(\mathcal{L}_{K,N} f_{k})(\eta) &=& \eta(k) \sum\limits_{j \neq k}   \left( \mathbbm{1}_{\{ j = k + 1\}} + \theta \mathbbm{1}_{\{ j = k - 1\}}  + \eta(j) \frac{p}{N-1} \right) \left[ T_{k \rightarrow j} \eta (k) - \eta(k) \right] \\
	& & + \sum\limits_{i \neq k} \eta(i)  \left( \mathbbm{1}_{\{ k = i + 1\}} + \theta \mathbbm{1}_{\{ k = i - 1\}}  + \eta(k) \frac{p}{N-1} \right) \left[ T_{i \rightarrow k} \eta (k) - \eta(k) \right] \\
	&=& - \eta(k) \left[ 1 + \theta + p \frac{N - \eta(k)}{N-1} \right]  + \eta(k-1) + \theta \eta(k+1) + p \; \eta(k) \frac{N - \eta(k)}{N-1}\\
	&=& \eta(k-1) - (1 + \theta) \eta(k) + \theta \; \eta(k+1),
\end{eqnarray*}
for all $\eta \in \mathcal{E}_{K,N}$. Thus, (\ref{exprLBk}) is proved.

Now, for computing $\mathcal{L}_{K,N} f_{k,l}$ for all $1 \le k,l \le K$, we separate the proof in three cases: $l=k$, $l = k + 1$ and $l > k + 1$, for all $0 \le k \le K-2$.

{\bf{Case} $l=k$}:

From (\ref{generator1}) we have
\[
(\mathcal{L}_{K,N} f_{k,k})(\eta) = \sum\limits_{i, j} \eta(i)  \left( \mathbbm{1}_{\{ j = i + 1\}} + \theta \mathbbm{1}_{\{ j = i - 1\}}  + \eta(j) \frac{p}{N-1} \right) \left[ f_{k,k}\big(T_{i \rightarrow j} \eta\big) - f_{k,k}\big(\eta\big) \right],
\]
for all $\eta \in \mathcal{E}_{K,N}$.
Denote
\[
	S_{i,j}(\eta) = \eta(i) \left( \mathbbm{1}_{\{ j = i + 1\}}  + \theta \mathbbm{1}_{\{ j = i - 1\}}  + \eta(j) \frac{p}{N-1} \right) \left[T_{i \rightarrow j} \eta (k)^2 - \eta(k)^2 \right].
\]
Note that if $\{i,j\} \cap \{k\} = \emptyset$, then we have $S_{i,j}(\eta) = 0$. So,
\[ (\mathcal{L}_{K,N} f_{k,k})(\eta) = \sum_{j \neq k} S_{k, j}(\eta) + \sum_{i \neq k} S_{i,k}(\eta).\]

Note that
\begin{eqnarray}
	\sum_{j \neq k} S_{k,j}(\eta) &=& \sum_{j \neq k } \eta(k) \left( \mathbbm{1}_{\{ j = k + 1\}} + \theta \mathbbm{1}_{\{ j = k - 1\}} + \eta(j) \frac{p}{N-1} \right) \left[ T_{k \rightarrow j} \eta (k)^2 - \eta(k)^2\right] \nonumber \\
	&=&  \eta(k)  \left( 1 + \theta +  \frac{p}{N-1} {\sum_{j \neq k } \eta(j)} \right) \left[ (\eta (k) - 1)^2 - \eta(k)^2\right] \nonumber \\
	&=&    \left( \eta(k) + \theta \; \eta(k) +  p \; \eta(k) \frac{{N - \eta(k)}}{N-1} \right)\left[ - 2 \eta(k) + 1 \right], \label{sum1.1}  \\
	\sum_{i \neq k} S_{i,k}(\eta) &=& \sum_{i \neq k } \eta(i)  \left( \mathbbm{1}_{\{ {k = i + 1}\}} + \theta \mathbbm{1}_{\{ {k = i - 1}\}}  + \eta(k) \frac{p}{N-1} \right) \left[ T_{i \rightarrow k} \eta (k)^2 - \eta(k)^2\right] \nonumber \\
	&=&  \left( {\eta(k - 1)} + \theta \; {\eta(k + 1)}+ \eta(k) \frac{p}{N-1} {\sum_{i \neq k } \eta(i)}  \right) \left[ (\eta (k) + 1)^2 - \eta(k)^2\right] \nonumber \\
	&=&  \left( \eta(k - 1) + \theta \; \eta(k + 1) + p \; \eta(k) \frac{{N - \eta(k)} }{N-1} \right) \left[ 2 \eta(k) + 1 \right]. \label{sum1.2}
\end{eqnarray}

Summing (\ref{sum1.1}) and (\ref{sum1.2}), we obtain
\begin{eqnarray}
	(\mathcal{L}_{K,N} f_{k,k})(\eta) &=& \sum_{j \neq k} S_{k,j}(\eta) + \sum_{i \neq k} S_{i,k}(\eta)\nonumber \\
	&=& 2 \eta(k) \left[ \eta(k - 1) - \eta(k) + \theta (\eta(k+1) - \eta(k) ) \right] \nonumber \\
	& & + (\eta(k) + \eta(k-1)) + \theta (\eta(k+1) + \eta(k)) + 2 p \; \eta(k) \frac{N - \eta(k)}{N-1} \nonumber \\
	&=& 2 \left[\eta(k-1) \eta(k) - \left(1 + \theta + \frac{p}{N-1} \right) \eta(k)^2 + \theta \eta(k) \eta(k+1) \right]  \nonumber \\
	& & + \eta(k-1) + \left(1 + \theta + \frac{2 p N }{N-1} \right) \eta(k) + \theta \eta(k+1),\nonumber
\end{eqnarray}
for all $\eta \in \mathcal{E}_{K,N}$. Thus, (\ref{exprLBkk}) holds.

{\bf{Case} $l = k + 1$}:

From (\ref{generator1}), similarly to the previous case,  we have
\[
	(\mathcal{L}_{K,N} f_{k, k+1}) (\eta) = \sum\limits_{i, j} \eta(i)  \left( \mathbbm{1}_{\{ j = i + 1\}}   + \theta \mathbbm{1}_{\{ j = i - 1\}}   + \eta(j) \frac{p}{N-1} \right) [f_{k,k+1}\left(T_{i \rightarrow j} \eta \right) - f_{k, k+1}(\eta) ].
\]

Denote
\[
R_{i,j} (\eta) =  \eta(i) \left( \mathbbm{1}_{\{ j = i + 1\}} + \theta \mathbbm{1}_{\{ j = i - 1\}} + \eta(j) \frac{p}{N-1} \right)  [ T_{i \rightarrow j} \eta (k) \, T_{i \rightarrow j}\eta(k + 1) - \eta(k) \eta(k+1) ] .
\]
If $\{i, j\} \cap \{k, k+1\} = \emptyset$, then $R_{i,j} = 0$. Thus,
\[
(\mathcal{L}_{K,N} f_{k, k+1})(\eta) = \sum_{j \neq k} R_{k,j}(\eta) + \sum_{i \neq k, k+1} R_{i,k+1}(\eta) + \sum_{j \neq k+1} R_{k+1,j}(\eta) + \sum_{i \neq k, k+1} R_{ i,k }(\eta).
\]

Note that
\begin{eqnarray*}
	\sum_{j \neq k} R_{k,j}(\eta) &=& R_{k,k+1}(\eta) + \sum_{j \neq k, k+1} R_{k,j}(\eta)\\
	&=& \eta(k) [(\eta(k) - 1)(\eta(k+1) + 1) - \eta(k) \eta(k+1)] \left[ 1 + p \frac{\eta(k+1)}{N - 1}\right]\\
	& & + {\sum_{j \neq k, k+1}} \eta(k) [(\eta(k) - 1)\eta(k+1) - \eta(k) \eta(k+1)] \\
	& & \times  \left( \mathbbm{1}_{\{ j = k + 1\}}  + \theta \mathbbm{1}_{\{ {j = k - 1}\}}  + {\eta(j)} \frac{p}{N-1} \right)\\
	&=& {\eta(k)} [\eta(k) -  {\eta(k+1)} - 1] \left[ 1 + p \frac{\eta(k+1)}{N - 1}\right] \\
	& & - {\eta(k) \eta(k+1)} \left( {\theta} + \frac{p}{N-1} {\sum_{j \neq k, k+1} \eta(j) } \right)\\
	&=& \eta(k) [\eta(k) - 1] \left[ 1 + p \frac{\eta(k+1)}{N - 1}\right] \\
	& & - {\eta(k) \eta(k+1)} (1 + \theta) - p \, {\eta(k) \eta(k+1)}\frac{N - \eta(k)}{N-1},\\
	\sum_{i \neq k, k+1} R_{i,k+1} (\eta) &=& {\sum_{i \neq k, k+1} \eta(i)} [\eta (k)(\eta(k + 1) +1) - \eta(k) \eta(k+1) ] \\
	& & \times \left( \mathbbm{1}_{\{ k+1 = i + 1\}} + \theta \mathbbm{1}_{\{ {k+1 = i - 1}\}} + \eta(k+1) \frac{p}{N-1} \right)\\
	&=& \eta(k) \left( \theta {\eta(k+2)}  + p\, \eta(k+1) \frac{{\sum_{i \neq k, k+1} \eta(i)}}{N-1} \right)\\
	&=&  \theta \eta(k) \eta(k+2) + p \, \eta(k) \eta(k+1) \frac{N - \eta(k) - \eta(k+1)}{N-1},\\
	\sum_{j \neq k+1} R_{k+1, j}(\eta) &=& R_{k+1, k}(\eta) + \sum_{j \neq k, k+1} R_{k+1, j}(\eta)\\
	&=& \eta(k+1) [(\eta(k) + 1)(\eta(k+1) - 1) - \eta(k) \eta(k+1)] \left[ \theta  + p \frac{\eta(k)}{N - 1}\right]\\
	& & + {\sum_{j \neq k, k+1}} \eta(k+1) [\eta(k)(\eta(k+1)-1) - \eta(k) \eta(k+1)] \\
	& & \times \left( \mathbbm{1}_{\{ {j = k + 2}\}}  + \theta \mathbbm{1}_{\{ j = k\}}  + p  \frac{{\eta(j)}}{N-1} \right)\\
	&=& {\eta(k+1)} [\eta(k + 1) - {\eta(k)} - 1] \left[ \theta + p \frac{\eta(k)}{N - 1}\right] \\
	& & - {\eta(k) \eta(k+1)}  \left(  {1} +  \frac{p}{N-1} {\sum_{j \neq k, k+1} \eta(j)} \right)\\
	&=& \eta(k+1) [\eta(k + 1) - 1] \left[ \theta + p \frac{\eta(k)}{N - 1}\right] -{ \eta(k) \eta(k+1)} (1 + \theta)\\
	& & - p \,{\eta(k) \eta(k+1)} \frac{N - \eta(k+1)}{N-1}, \\
	\sum_{i \neq k, k+1} R_{ i, k }(\eta) &=& {\sum_{i \neq k, k+1} \eta(i)} [{(\eta (k)+1)\eta(k + 1) - \eta(k) \eta(k+1) }] \\
	& & \times \left( \mathbbm{1}_{\{ {k = i + 1}\}}  + \theta \mathbbm{1}_{\{ k = i - 1\}}  + \eta(k) \frac{p}{N-1} \right)\\
	&=& {\eta(k+1)} \left( {\eta(k-1)}+ p \, \eta(k) \frac{{N - \eta(k) - \eta(k+1)}}{N - 1} \right)\\
	&=& \eta(k-1) \eta(k+1) + p \, \eta(k) \eta(k+1) \frac{N - \eta(k) - \eta(k+1)}{N - 1}. \\
\end{eqnarray*}
Then,
\begin{eqnarray}
	(\mathcal{L}_{K,N} f_{k,k+1})(\eta) &=& -\eta(k) \eta(k+1) \left[ 2 ( 1 + \theta ) + p \frac{2N - \eta(k) - \eta(k+1) - 2 [N - \eta(k) - \eta(k+1)]}{N - 1} \right] \nonumber \\
	& & + \eta(k) [\eta(k) - 1] \left( 1 + p \frac{\eta(k+1)}{N-1} \right) + \eta(k+1) [\eta(k+1) - 1] \left( \theta + p \frac{\eta(k)}{N-1} \right) \nonumber \\
	& & + \eta(k-1) \eta(k+1) + \theta \eta(k) \eta(k+2) \nonumber \\
	&=& -\eta(k) \eta(k+1) \left[ 2  (1 + \theta) + p \frac{{\eta(k)} + {\eta(k+1)}}{N - 1} \right] \nonumber \\
	& & + \eta(k) [\eta(k) - 1] \left( 1 + p \frac{{\eta(k+1)}}{N-1} \right) + \eta(k+1) [\eta(k+1) - 1] \left( \theta + p \frac{{\eta(k)}}{N-1} \right) \nonumber \\
	& & + \eta(k-1) \eta(k+1) + \theta \eta(k) \eta(k+2) \nonumber \\
	&=& -2 \eta(k) \eta(k+1) (1 + \theta) + \eta(k) [\eta(k) - 1]+ \theta \eta(k+1) [\eta(k+1) - 1]  \nonumber \\
	& & - 2 p \frac{\eta(k)\eta(k+1)}{N-1} + \eta(k-1) \eta(k+1) + \theta \eta(k) \eta(k+2) \nonumber \\
	&=& -2 \left( 1 + \theta + \frac{p}{N-1} \right) \eta(k) \eta(k+1) + \eta(k-1) \eta(k+1)  \nonumber \\
	& & + \theta \eta(k+1)^2 + \eta(k)^2 + \theta \eta(k) \eta(k+2) - \eta(k) - \theta \eta(k+1), \nonumber \label{generator_ec2}
\end{eqnarray}
for all $\eta \in \mathcal{E}_{K,N}$, which is equivalent to (\ref{exprLBkl}).

{\bf{Case} $l > k+1$}:

In this case we have
\[
(\mathcal{L}_{K,N} f_{k,l})(\eta) = \sum\limits_{i, j \in F} \eta(i)  \left( \mathbbm{1}_{\{ j = i + 1\}}  + \theta \mathbbm{1}_{\{ j = i - 1\}}  + \eta(j) \frac{p}{N-1} \right) \left[ f_{k,l}\big(T_{i \rightarrow j}\eta \big) - f_{k,l}\big(\eta \big) \right].
\]

Denote
\[
T_{i,j} (\eta) =  \eta(i)  \left( \mathbbm{1}_{\{ j = i + 1\}} + \theta \mathbbm{1}_{\{ j = i - 1\}} + \eta(j) \frac{p}{N-1} \right) [ T_{i \rightarrow j} \eta (k) \, T_{i \rightarrow j}\eta(l) - \eta(k) \eta(l) ].
\]
Obviously, if $\{i,j\} \cap \{k, k+l\} = \emptyset$, then $T_{i,j}(\eta) = 0$.
Thus
\[
(\mathcal{L}_{K,N} f_{k,l}) (\eta) = \sum_{j \neq k} T_{k , j}(\eta) + \sum_{i \neq k, k+l} T_{i , k+l}(\eta) + \sum_{j \neq k+l} T_{k+l , j}(\eta) + \sum_{i \neq k, k+l} T_{ i, k }(\eta).
\]

Note that
\begin{eqnarray*}
	\sum_{j \neq k} T_{k , j}(\eta) &=& T_{k, k+l}(\eta) + \sum_{j \neq k, k+l} T_{k , j}(\eta)\\
	&=& \eta(k) [(\eta(k) - 1) (\eta(k+l) + 1) - \eta(k) \eta(k + l)] p \frac{\eta(k+l)}{N - 1}\\
	& &  + {\sum_{j \neq k, k+l}} \eta(k) [(\eta(k) - 1)\eta(k+l) - \eta(k) \eta(k+l)]  \\
	& & \times \left( \mathbbm{1}_{ \{j = k+1\}}  + \theta \mathbbm{1}_{\{j = k-1\}}  + p \frac{{\eta(j)}}{N-1} \right)\\
	&=& {\eta(k)} [ \eta(k) - \eta(k+l) - 1] p \frac{{\eta(k+l)}}{N - 1} \\
	& & -  {\eta(k) \eta(k+l)} \left( 1 + \theta + \frac{p}{N-1} {\sum_{j \neq k, k+l} \eta(j)} \right)\\
	&=& {\eta(k) \eta(k+l)} \left[ \frac{p}{N - 1} (\eta(k) - 1) - (1 + \theta) - p \frac{N - \eta(k)}{N - 1} \right],\\
	\sum_{i \neq k, k+l} T_{i, k+l}(\eta) &=& {\sum_{i \neq k, k+l} \eta(i)} \left[ \eta(k) (\eta(k+l) + 1) - \eta(k) \eta(k+l) \right] \\
	& & \times \left( \mathbbm{1}_{\{ k+l = i + 1\}} \frac{1}{K} + \mathbbm{1}_{\{ k+l = i - 1\}} \frac{\theta}{K} + \eta(k+l) \frac{p}{N-1} \right)\\
	&=& \eta(k) \left[ \eta(k+l-1) + \theta  \eta(k+l+1) + p \, \eta(k+l) \frac{{N - \eta(k) - \eta(k+l)}}{N - 1}\right]\\
	&=& \eta(k) \eta(k+l-1) + \theta \eta(k) \eta(k+l+1) + p \, \eta(k) \eta(k+l) \frac{{N - \eta(k) - \eta(k+l)}}{N - 1},\\
	\sum_{j \neq k+l} T_{k+l, j}(\eta) &=& T_{k+l, k}(\eta) + \sum_{j \neq k, k+l} T_{k+l, j}(\eta)\\
	&=& \eta(k+l) \left[ (\eta(k) + 1) (\eta(k+l) - 1) - \eta(k) \eta(k + l) \right] p \frac{\eta(k)}{N - 1}\\
	& &  + {\sum_{j \neq k, k+l}} \eta(k + l) [\eta(k) (\eta(k+l)-1) - \eta(k) \eta(k+l)] \\
	& & \times \left( \mathbbm{1}_{ \{j = k+l+1\}} + \theta \mathbbm{1}_{\{j = k+l-1\}} + p \frac{{\eta(j)}}{N-1} \right)\\
	&=&  {\eta(k+l)} \left[ \eta(k+l) - \eta(k) - 1 \right] p \frac{{\eta(k)}}{N - 1} \\
	& & - {\eta(k) \eta(k+l)} \left[ 1 + \theta + \frac{p}{N - 1} {\sum_{j \neq k, k+l} \eta(j)} \right]\\
	&=& \eta(k+l) \left[ \eta(k+l) - \eta(k) - 1 \right] p \frac{\eta(k)}{N - 1}\\
	& & - \eta(k) \eta(k+l) \left[ 1 + \theta + p \frac{{N - \eta(k) - \eta(k + l)}}{N - 1} \right]\\
	&=& \eta(k) \eta(k + l) \left[ (\eta(k+l) - 1) \frac{p}{N - 1} - \frac{1 + \theta}{N} - p \frac{N - \eta(k+l)}{N-1} \right],\\
	\sum_{i \neq k, k+l} T_{i,k}(\eta) &=& {\sum_{i \neq k, k+l} \eta(i)} [(\eta(k) + 1)\eta(k+l) - \eta(k) \eta(k+l)] \\
	& & \times \left( \mathbbm{1}_{\{k = i+1\}}  + \theta \mathbbm{1}_{\{k = i-1\}}  + p \frac{\eta(k)}{N-1} \right)\\
	&=& \eta(k+l) \left[ \eta(k-1) + \theta \eta(k+1) + p \, \eta(k) \frac{{N - \eta(k) - \eta(k+l)}}{N-1} \right]\\
	&=& \eta(k-1)\eta(k+l) + \theta \eta(k+1)\eta(k+l) + p \, \eta(k)\eta(k+l) \frac{{N - \eta(k) - \eta(k+l)}}{N-1}.
\end{eqnarray*}
Thus,
\begin{eqnarray}
	(\mathcal{L}_{K,N} f_{k,l})(\eta) &=& \eta(k) \eta(k+l) \left( \frac{p}{N - 1} [{\eta(k) + \eta(k+l)} - 2]- 2 ( 1 + \theta ) \right. \nonumber \\
	& & \left. - \frac{p}{N - 1} \left[ {2N - \eta(k) - \eta(k+l) - 2[N - \eta(k) - \eta(k+l)]} \right]  \right) \nonumber \\
	& & + \eta(k) \left[ \eta(k + l -1) + \theta \eta(k + l + 1) \right] + \eta(k + l) \left[ \eta(k-1) + \theta \eta(k + 1) \right] \nonumber \\
	&=& - 2\eta(k) \eta(k+l) \left( 1 + \theta + \frac{p}{N - 1}  \right) + \eta(k) \left[ \eta(k + l -1) + \theta \eta(k + l + 1) \right] \nonumber \\
	& & + \eta(k + l) \left[ \eta(k-1) + \theta \eta(k + 1) \right], \nonumber \label{generator_ec3}
\end{eqnarray}
for all $\eta \in \mathcal{E}_{K,N}$, proving (\ref{exprLBkl1}).

\section*{Acknowledgements}

I would like to thank Djalil Chafa\"{\i}, Bertrand Cloez, Simona Grusea and Didier Pinchon for very useful discussions that greatly enriched this work.

\end{document}